\newlist{steps}{enumerate}{1}
\setlist[steps, 1]{label = Step \arabic*:}
\newcommand{\gd}{\Delta}
\newcommand{\tu}{\Tilde{u}}
\newcommand{\tg}{\Tilde{g}}
\newcommand{\tJ}{\Tilde{J}}
\newcommand{\tap}{\Tilde{\alpha}}
\newcommand{\tgb}{\Tilde{\beta}}
\newcommand{\tgl}{\Tilde{\lambda}}
\newcommand{\tq}{\Tilde{q}}
\newcommand{\tL}{\Tilde{L}}
\newcommand{\txi}{\Tilde{\xi}}
\newcommand{\tV}{\Tilde{V}}
\newcommand{\tf}{\Tilde{f}}
\newcommand{\Th}{\Tilde{h}}
\newcommand{\inpt}[1]{\langle #1 \rangle}
\newcommand{\gw}{\Omega}
\newcommand{\ap}{\alpha}
\newcommand{\ga}{\gamma}
\newcommand{\de}{\delta}
\newcommand{\gb}{\beta}
\newcommand{\G}{\Gamma}
\newcommand{\gl}{\lambda}
\newcommand{\gL}{\Lambda}
\newcommand{\gs}{\sigma}
\newcommand{\ms}{\mathscr}
\newcommand{\nb}{\nabla}
\newcommand{\ve}{\varepsilon}
\newcommand{\pdr}{\partial}
\newcommand{\beq}{\begin{equation}}
\newcommand{\eeq}{\end{equation}}
\newcommand{\bea}{\begin{align}}
\newcommand{\eea}{\end{align}}
\newcommand{\bthm}{\begin{theorem}}
\newcommand{\ethm}{\end{theorem}}
\newcommand{\bpr}{\begin{proof}}
\newcommand{\epr}{\end{proof}}
\newcommand{\bcl}{\begin{corollary}}
\newcommand{\ecl}{\end{corollary}}
\newcommand{\bpn}{\begin{proposition}}
\newcommand{\epn}{\end{proposition}}
\newcommand{\bre}{\begin{remark}}
\newcommand{\ere}{\end{remark}}
\newcommand{\bdf}{\begin{definition}}
\newcommand{\edf}{\end{definition}}
\newcommand{\bss}{\begin{align*}}
\newcommand{\ess}{\end{align*}}
\newcommand{\bl}{\label}
\newcommand{\mR}{\mathbb{R}}
\newtheorem{theorem}{Theorem}[section]
\newtheorem{corollary}[theorem]{Corollary}
\newtheorem{proposition}[theorem]{Proposition}
\theoremstyle{definition}
\newtheorem{definition}[theorem]{Definition}
\theoremstyle{remark}
\newtheorem{remark}{Remark}
\numberwithin{equation}{section}
\begin{document}

\title[Synchronization of Reaction-Diffusion Neural Networks]{Dynamics and Synchronization of Weakly Coupled Memristive Reaction-Diffusion Neural Networks}

\author[Y. You]{Yuncheng You}%$^{\dag}$
\address{University of South Florida, Tampa, FL 33620, USA}
\email{you@mail.usf.edu}
\thanks{}

\author[J. Tu]{Junyi Tu}
\address{Salisbury University, Salisbury, MD 21801, USA}
\email{jxtu@salisbury.edu}
\thanks{}

%    General info
\subjclass[2010]{35B40, 35G50, 35K57, 37N25, 92C20}

\date{\today}

%\dedicatory{This paper is dedicated to }

\keywords{Memristive neural network, synchronization, reaction-diffusion equations, dissipative dynamics, nonlinear coupling strength.}

\begin{abstract} 
A new mathematical model of memristive neural networks described by the partly diffusive reaction-diffusion equations with weak synaptic coupling is proposed and investigated. Under rather general conditions it is proved that there exists an absorbing set showing the dissipative dynamics of the solution semiflow in the energy space and multiple ultimate bounds. Through uniform estimates and maneuver of integral inequalities and sharp interpolation inequalities on the interneuron differencing equations, it is rigorously proved that exponential synchronization of the neural network solutions at a uniform convergence rate occurs if the coupling strength satisfies a threshold condition expressed by the system parameters. Applications with numerical simulation to the memristive diffusive Hindmarsh-Rose neural networks and FitzHugh-Nagumo neural networks are also shown.
\end{abstract}

\maketitle
 
\section{Introduction}
In this paper we shall consider a new mathematical model of neural networks described by a system of partly diffusive hybrid differential equations with memristors and weak nonlinear interneuron coupling. 

Let a network of $m$ coupled memristive neuron cells be denoted by $\mathcal{NW} = \{\mathcal{N}_i : i = 1, 2, \cdots, m\}$, where $m \geq 2$ is a positive integer, which is described by the following model of partly diffusive and memristive equations with nonlinear couplings. Each neuron $\mathcal{N}_i, 1 \leq i \leq m$, in the network is presented by the hybrid differential equations:
\beq \bl{NW} 
\begin{split}
	& \frac{\pdr u_i}{\pdr t} = \eta \gd u_i + f (u_i, z_i) - k \tanh (\rho_i) u_i - P u_i \sum_{j=1}^m \G (u_j),  \\
	& \frac{\pdr z_i}{\pdr t}  =  \gL z_i + h(u_i, z_i),    \\[3pt]
	& \frac{\pdr \rho_i}{\pdr t} = a u_i - b \rho_i,
\end{split} 
\eeq
for $t > 0,\; x \in \gw \subset \mathbb{R}^{n}$ ($n \leq 3$), where $\gw$ is a bounded domain with locally Lipschitz continuous boundary $\partial \gw$. Here $\gd$ is the Laplacian operator with respect to the spatial variable $x \in \gw$ and $\gL$ is an $\ell \times \ell$ constant square matrix. In the nonlinear synaptic coupling term for neurons, the function
\beq \bl{Ga}
	\G (s) = \frac{1}{1 + \exp [- r (s - V)]} 
\eeq
is a sigmoidal function. The biological meaning of the parameters in this type of nonlinear weak couplings can be seen in \cite{Co, HR, M, PL, TI}. The coefficient $P > 0$ is the coupling strength. The constant $V \in \mathbb{R}$ is a threshold for neuron bursting, and $r > 0$ shapes the sigmoidal function versus the Heaviside function. 

In this system \eqref{NW}, for $ 1 \leq i \leq m$, the transmembrane electric potential $u_i (t, x)$ and the memductance $\rho_i (t, x)$ of the memristor (which is caused by the electromagnetic induction flux across the neuron membrane) are scalar functions, while  $z_i (t, x)$ can be an $\ell$-dimensional ($\ell \geq 1$) vector function whose components represent various ionic currents in the neuron cell. The memristive-potential coupling $ - k \tanh (\rho_i) u_i $ is a nonlinear term.

We impose the homogeneous Neumann boundary condition on the first component function in \eqref{NW}:
\begin{equation} \label{nbc}
	\frac{\pdr u_i}{\pdr \nu} (t, x) = 0, \quad \text{for} \;\; t > 0,  \; x \in \partial \gw, \quad 1 \leq i \leq m.
\end{equation}
The initial states of the system \eqref{NW} will be denoted by 
\begin{equation} \bl{inc}
	 u_i^0 (x) = u_i(0, x), \; z_i^0 (x) = z_i (0, x) ,  \;\rho_i^0 = \rho_i (0, x), \;\; 1 \leq i \leq m.
\end{equation}
The scalar function $f \in C^1 (\mathbb{R}^{1 + \ell}, \mathbb{R})$ and the vector function $h \in C^1 (\mathbb{R}^{1 + \ell}, \mathbb{R}^{\ell})$ are continuously differentiable and we assume that
\beq \bl{Asp1}
	\begin{split}
	&f(s, \gs) s \leq  - \ap |s|^4 + \gl |s| |\gs| + J, \quad  (s, \gs) \in \mathbb{R}^{1 + \ell}, \\
	&\max \left\{\frac{\pdr f}{\pdr s} (s, \gs), \, \left| \frac{\pdr f}{\pdr \gs} (s, \gs)\right| \right\} \leq \gb,  \; (s, \gs) \in \mathbb{R}^{1 + \ell},
	\end{split}
\eeq
and the $\ell$-dimensional matrix $\gL$ and $h(s, \gs)$ satisfy
\beq \bl{Asp2}
	\begin{split}
	&\langle \gL \gs, \gs \rangle \leq - \ga |\gs|^2, \quad \gs \in \mathbb{R}^\ell,   \\[3pt]
	&h(s, \gs) \gs \leq q |s|^2 |\gs| + L |\gs |,  \quad (s, \gs ) \in \mathbb{R}^{1 + \ell},  \\
	& \left| \frac{\pdr h}{\pdr s}(s, \gs) \right| \leq \xi (|s| + 1), \;\; \frac{\pdr h}{\pdr \gs}(s, \gs) = 0,  \quad (s, \gs ) \in \mathbb{R}^{1 + \ell},
	\end{split}
\eeq
where the parameters $\eta, a, b, k$ in \eqref{NW}, $\ap, \gl, J, \gb$ in \eqref{Asp1}, and $\ga, q, L, \xi $ in \eqref{Asp2} are all positive constants. Note that Assumptions \eqref{Asp1} and \eqref{Asp2} are satisfied by the partly diffusive FitzHugh-Nagumo neural networks \cite{LSY, SkY} and the partly diffusive Hindmarsh-Rose neural networks \cite{Y1, Y2}, which will be shown in Section 6.

In neurobiology, the ionic currents flowing across a neuron cell's membrane cause changes of the membrane potential over time. The resulting electrical signals propagate through the neuron axon and stimulate the dendrites of neighbor neurons by synapses, which constitute a biological neural network. An excitatory neuron's firing consists of successive spiking followed by relatively long period of quiescence called bursting. Chaotic bursting means the number of spikes per burst is irregular.

Synchronization mechanism of various neural networks revealed by mathematical models and analysis is one of the central topics in the research of neuroscience, medical science, and artificial neural networks \cite{AD, B, ET, G, Ha, PRK, VK, WL, Wu}. For many neuron models in terms of ODEs with or without time-delay, analysis of Hopf bifurcations, stability by Lyapunov exponents, the energy or Hamiltonian functions,  and numerical simulations are the main approaches to show synchronization of neuron ensembles or neural networks \cite{ET, Iz, N, PL, TI, ZS}.

Dynamics and synchronization of cellular neural networks and memristive neural networks modeled by partly diffusive Hindmarsh-Rose equations and FitzHugh-Nagumo equations have been studied in the authors' group recently \cite{CPY, PSY, LSY, SkY, Y1, Y2, Y3}. These neural network models of hybrid (PDE-ODE) differential equations reflect the structural feature of neuron cells, which contain the short-branch dendrites receiving incoming signals and the long-branch axon transmitting outgoing signals through synapses. 

Memristor concept coined by Chua \cite{Chua} describes the effect of electromagnetic flux on moving charges such as the ionic currents in neuron cells. In advanced biological neuron models and artificial intelligence computing \cite{Ay, E, Jo, Li, SW, U, VK, WZ}, the memristive feature was exhibited as a new type (other than electrical and chemical) synaptic coupling or an ideal component which has the nonvolatile properties and can process dynamically memorized signal information to deal with complex or chaotic behaviors in neural networks. Memristor-based differential equation models now appear in many fields with applications to image encryption, DNA sequences operation, brain criticality, cell physiology, cybersecurity, drift-diffusion models in semiconductor devices, and quantum computers, cf. \cite{JZ, Li, LH, RK, Sn, SW, W}. 

Researches on dynamics of memristive neural networks in ODE models with linear interneuron couplings are increasing in the recent years, cf.\cite{Ay, Guan, E, K, N, RJ, TI, U, VK, XJ}, mainly by computational simulations combined with semi-analytic methods. 

Very recently in \cite{Y1, Y2, Y3} the authors rigorously proved the dissipative dynamics and the exponential synchronization of the memristive Hindmarsh-Rose neural networks and FitzHugh-Nagumo neural networks with the partly diffusive PDE-ODE models and linear interneuron couplings. Note that the linear interneuron coupling can be viewed as a strong coupling for neural networks, which is mathematically amenable but may not always well reflect the biological synaptic interactions.

It is an open and challenging problem to theoretically show that partly diffusive and memristive biological neural networks with the nonlinear interneuron coupling shown in \eqref{NW} with \eqref{Ga} is dissipative and synchronizable under a threshold condition. Such a nonlinear coupling can be called weak coupling. 

In this work we shall prove a sufficient condition on the network coupling strength to ensure an exponential synchronization of the neural networks modeled in \eqref{NW}-\eqref{Ga} by the approach of showing dissipative dynamics of the system through sharp and uniform integral estimates. Moreover, the general model \eqref{NW} with the Assumptions \eqref{Asp1}-\eqref{Asp2} for neural networks can cover all the typical neuron models including Hindmarsh-Rose equations \cite{HR}, FitzHugh-Nagumo equations \cite{FH}, Hodgkin-Huxley equations \cite{HH}, FitzHugh-Rinzel equations \cite{WZ}, and Morris-Lecar equations \cite{WL}. It is worth mentioning that an effective methodology developed here from dissipative dynamics toward synchronization was originated in J.K. Hale's paper in 1997 \cite{Ha}. This approach can be extended and explored to study many other complex or artificial neural networks in a broad scope.

\section{\textbf{Formulation and Preliminaries}}

For a framework to formulate the solution and dynamics problem of the neural network system \eqref{NW}-\eqref{inc}, we define two Hilbert spaces of functions: 
$$
E = [L^2 (\gw, \mathbb{R}^{2 + \ell})]^m \quad \text{and}  \quad  \Pi = [H^1 (\gw) \times L^2 (\gw, \mathbb{R}^{\ell + 1})]^m
$$ 
where $H^1 (\gw)$ is a Sobolev space. One can call $E$ the energy space and $\Pi$ the regular space. The norm and inner-product of $L^2(\gw)$ or $E$ will be denoted by $\| \, \cdot \, \|$ and $\inpt{\,\cdot , \cdot\,}$, respectively. We use $| \, \cdot \, |$ to denote a vector norm or a set measure in Euclidean spaces $\mR^n$.

The initial-boundary value problem \eqref{NW}-\eqref{inc} can be formulated into an initial value problem of the evolutionary equation:
\begin{equation} \label{pb}
	\begin{split}
		\frac{\partial g}{\partial t} = &\,A g + F(g), \;\; t > 0, \\
		&g(0) = g^0 \in E.
	\end{split}
\end{equation}
The unknown function in \eqref{pb} is a column vector $g(t) = \text{col}\; (g_1 (t), g_2 (t), \cdots, g_m (t))$, where the component subvector 
$$
g_i (t) = \text{col}\, (u_i(t, \cdot),\, z_i(t, \cdot),\, \rho_i (t, \cdot)), \quad \text{for} \;  1 \leq i \leq m,
$$ 
characterizes the dynamics of the neuron $\mathcal{N}_i$, for $1 \leq i \leq m$. The initial data function in \eqref{pb} is 
$$
g(0) = g^0 = \text{col}\; (g_1^0, \,g_2^0, \cdots, g_m^0) \quad \text{where} \;\; g_i^0 = \text{col}\,(u_i^0, \, z_i^0, \,\rho_i^0), \;\, 1 \leq i \leq m. 
$$
The energy norm $\|g(t)\|$ of a solution $g(t)$ for the evolutionary equation \eqref{pb} in the space $E$ is given by 
$$
\|g(t)\|^2 = \sum_{i=1}^m \|g_i (t)\|^2 = \sum_{i=1}^m \left(\|u_i(t)\|^2 + \|z_i(t)\|^2 + \|\rho_i(t)\|^2 \right).
$$
The closed linear operator $A$ in \eqref{pb} is defined by $A= \text{diag} \, (A_1, A_2, \cdots, A_m)$, where
\begin{equation} \label{opA}
	A_i =
	\begin{pmatrix}
		\eta \gd \quad & 0  \quad & 0  \\[3pt]
		0 \quad & \gL  \quad  & 0 \\[3pt]
		0 \quad & 0 \quad & - b I  \\[3pt]
	\end{pmatrix}_{(2 + \ell)\, \times\, (2 + \ell)}
	: \mathcal{D}(A) \rightarrow E, \quad i = 1, 2, \cdots, m,
\end{equation}
with the domain $\mathcal{D}(A) = \{g \in [H^2(\gw) \times L^2 (\gw, \mathbb{R}^{\ell + 1})]^m: \pdr u_i /\pdr \nu = 0, 1 \leq i \leq m\}$. The operator $A$ is the generator of $C_0$-semigroup $\{e^{At}\}_{t \geq 0}$ on the space $E$ and $I$ is the identity operator. By the fact that the Sobolev imbedding $H^{1}(\gw) \hookrightarrow L^6(\gw)$ is a continuous mapping for space dimension $n \leq 3$ and according to Assumptions \eqref{Asp1} and \eqref{Asp2}, the nonlinear mapping 
\begin{equation} \label{opf}
	F(g) =
	\begin{pmatrix}
		f (u_1, z_1) - k \tanh (\rho_1) u_1 - Pu_1 \sum_{j=2}^m \G (u_j)  \\[4pt]
		h(u_1, z_1)  \\[4pt]
		a u_1  \\[4pt]
		\vdots \\[4pt]
		f (u_m, z_m) - k \tanh (\rho_m)u_m - Pu_m \sum_{j=1}^{m-1} \G (u_j)  \\[4pt]
		h(u_m, z_m). \\[4pt]
		a u_m  
	\end{pmatrix}
	: \Pi \longrightarrow E
\end{equation}
is a locally Lipschitz continuous mapping. 

In this work we shall consider the weak solutions, cf. \cite[Section XV.3]{CV} and \cite[Section 4.2.3]{SY}, of this initial value problem \eqref{pb}.
\begin{definition} \label{D:wksn}
	A $(2 + \ell) m$-dimensional vector function $g(t, x)$, where $(t, x) \in [0, \tau] \times \gw$, is called a weak solution to the initial value problem of the evolutionary equation \eqref{pb}, if the following two conditions are satisfied: 
	
	\textup{(i)} $\frac{d}{dt} (g(t), \zeta) = (A^{1/2}g(t), A^{1/2}\zeta) + (F(g(t)), \zeta)$ is satisfied for almost every $t \in [0, \tau]$ and any $\zeta \in E^* = E$. 	
	
	\textup{(ii)} $g(t, \cdot) \in  C([0, \tau]; E) \cap C^1 ((0, \tau); E)$ and $g(0) = g^0$.		
	
	\noindent Here $\mathcal{D}(A^{1/2}) = \{g \in \Pi: \pdr u_i /\pdr \nu = 0,\, 1 \leq i \leq m\}$ and $E^*$ is the dual space of $E$. The bilinear $E$ vs $E^*$ dual product is in the scalar distribution sense. 
\end{definition}

The following proposition can be proved by Galerkin spectral approximation method \cite{CV} for the first statement on weak solutions and by the compactness property of the parabolic semigroup $e^{At}$ in mild solution bootstrap argument \cite[Theorem 42.12 and Corollary 42.13]{SY} for the second statement on strong solutions when $t > 0$.

\begin{proposition} \label{pps}
	For any given initial state $g^0 \in E$, there exists a unique weak solution $g(t; g^0), \, t \in [0, \tau]$, for some $\tau > 0$ may depending on $g^0$, of the initial value problem \eqref{pb} formulated from the memristive neural network equations \eqref{NW}. The weak solution $g(t; g^0)$ continuously depends on the initial data $g^0$ and satisfies 
	\begin{equation} \label{soln}
		g \in C([0, \tau]; E) \cap C^1 ((0, \tau); E) \cap L^2 ((0, \tau); \Pi).
	\end{equation}
	Moreover, for any initial state $g^0 \in E$, the weak solution $g(t; g^0)$ becomes a strong solution for $t \in (0, \tau)$, which has the regularity
	\begin{equation} \bl{ss}
		g \in C((0, \tau]; \Pi) \cap C^1 ((0, \tau); \Pi).
	\end{equation}
\end{proposition}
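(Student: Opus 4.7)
My plan is to follow the standard two-step program for hybrid parabolic--ODE systems: first I would build a weak solution by Galerkin truncation together with a priori energy estimates and compactness, and then I would upgrade it to a strong solution using the smoothing property of the analytic semigroup generated by $A$. Uniqueness and continuous dependence will drop out of a Gronwall estimate on the difference of two solutions, exploiting the fact that $F$ is locally Lipschitz from $\Pi$ to $E$ (as already noted after \eqref{opf}).

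For existence, let $\{\phi_k\}_{k \geq 1}$ be an $L^2(\gw)$-orthonormal basis of eigenfunctions of the Neumann Laplacian, put $V_N = \Span\{\phi_1,\dots,\phi_N\}$, and consider approximants $g^N = (u_i^N, z_i^N, \rho_i^N)_{i=1}^m$ with $u_i^N \in V_N$ solving the finite-dimensional projection of \eqref{pb}. Since the projected nonlinearity is smooth, Picard iteration gives local-in-time solutions. Testing the $u_i$-equation by $u_i^N$ and using $f(u,z)u \leq -\ap |u|^4 + \gl|u||z| + J$ together with $|\tanh \rho_i| \leq 1$ and $0 \leq \G \leq 1$ (so the memristive and coupling terms are absorbed by $\alpha |u|^4$ via Young's inequality), then testing the $z_i$-equation by $z_i^N$ using $\langle \gL z, z\rangle \leq -\ga|z|^2$ and $h(u,z)z \leq q|u|^2|z| + L|z|$, and finally testing the linear $\rho_i$-equation, I would obtain a uniform estimate of the form
\[
\sup_{t\in[0,\tau]} \|g^N(t)\|_E^2 + \int_0^\tau \|g^N(s)\|_\Pi^2 \, ds \leq C(\|g^0\|_E,\tau)
\]
that prevents finite-time blow-up and extends $g^N$ to $[0,\tau]$ for arbitrary $\tau > 0$. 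An estimate on $\pdr g^N/\pdr t$ in $L^2(0,\tau;\Pi^*)$, combined with the compact embedding from $\Pi$ into $E$ (via Rellich on the $u$-components) and the Aubin--Lions lemma, yields a subsequence converging strongly in $L^2(0,\tau;E)$. The Sobolev embedding $H^1(\gw) \hookrightarrow L^6(\gw)$ for $n \leq 3$ controls the at most cubic nonlinearity $f(u_i^N,z_i^N)$, while $\tanh \rho_i$ and $\G(u_j)$ pass to the limit trivially by bounded convergence. This produces a weak solution in the sense of Definition \ref{D:wksn}, with the continuity $g \in C([0,\tau];E)$ and integrability $g \in L^2(0,\tau;\Pi)$ asserted in \eqref{soln}. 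Uniqueness and continuous dependence then follow by writing $w = g^{(1)} - g^{(2)}$, testing by $w$, and closing a Gronwall inequality; the only delicate term, $u_1^3 - u_2^3 = (u_1-u_2)(u_1^2 + u_1u_2 + u_2^2)$, is handled by $\|(u_1-u_2)(\cdot)\|_{L^2} \leq \|u_1-u_2\|_{L^6}\|u_1^2+u_1u_2+u_2^2\|_{L^3}$ and the $L^2(0,\tau;\Pi)$ bound just established.

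For the strong regularity \eqref{ss}, I would pass to the mild formulation $g(t) = e^{At}g^0 + \int_0^t e^{A(t-s)} F(g(s))\,ds$. The parabolic block of $A$ generates an analytic semigroup whose fractional-power estimate $\|e^{At}\phi\|_\Pi \leq C\, t^{-1/2}\|\phi\|_E$ for $t > 0$ combines with the locally Lipschitz $F : \Pi \to E$ and the $L^2(0,\tau;\Pi)$ bound to bootstrap $g(t) \in \Pi$ for every $t > 0$, giving $g \in C((0,\tau];\Pi)$; differentiating the Duhamel identity and using analyticity then yields $g \in C^1((0,\tau);\Pi)$. The main technical obstacle across all three stages is that $f$ and $h$ are only superlinearly dissipative rather than globally Lipschitz: the quartic coercivity in \eqref{Asp1} must be used to absorb the cubic $f$-growth and the cross-term $q|u|^2|z|$ from \eqref{Asp2} uniformly in the Galerkin index $N$, in $t$, and in the bootstrap iteration, which is exactly what Young's inequality together with the $H^1 \hookrightarrow L^6$ embedding permit.
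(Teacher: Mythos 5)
Your proposal is correct and follows essentially the same route the paper indicates: the paper gives no detailed argument for Proposition \ref{pps}, instead citing the Galerkin spectral approximation method of Chepyzhov--Vishik for the weak solution and the mild-solution bootstrap via the compact analytic semigroup $e^{At}$ (Sell--You, Theorem 42.12) for the strong regularity, which is precisely the two-step program you flesh out. The only small remark is that your uniqueness step treats $f$ as literally cubic via $u_1^3-u_2^3$; under the general hypothesis \eqref{Asp1} the one-sided derivative bound $\pdr f/\pdr s \le \gb$ together with the mean value theorem closes the Gronwall estimate even more directly, exactly as the paper does later in the proof of Theorem \ref{ThM}.
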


An infinite dimensional dynamical system \cite{CV, SY} for time $t \geq 0$ only is usually called a semiflow. Absorbing set defined below is the key concept to characterize dissipative dynamics of a semiflow on a Banach space.

\begin{definition} \label{Dabsb}
	Let $\{S(t)\}_{t \geq 0}$ be a semiflow on a Banach space $\ms{X}$. A bounded set $B^*$ of $\ms{X}$ is called an \emph{absorbing set} of this semiflow, if for any given bounded set $B \subset \ms{X}$ there exists a finite time $T_B \geq 0$ depending on $B$, such that $S(t)B \subset B^*$ for all $t  > T_B$. The semiflow is said to be \emph{dissipative} if there exists an absorbing set.
\end{definition}

The Young's inequality in a generic form will be used throughout in this paper. For any two positive numbers $x$ and $y$, if $\frac{1}{p} + \frac{1}{q} = 1$ and $p > 1, q > 1$, one has
\beq \bl{yg}
x\,y \leq \frac{1}{p} \ve x^p + \frac{1}{q} C(\ve, p)\, y^q \leq \ve x^p + C(\ve, p)\, y^q, \quad C(\ve, p) = \ve^{-q/p},
\eeq
where constant $\ve > 0$ can be arbitrarily small. The Gagliardo-Nirenberg interpolation inequalities \cite[Theorem B.3]{SY} will be exploited in a crucial step to prove the main result on neural network synchronization.

\section{\textbf{Dissipative Dynamics of the Memristive Semiflow}}

In this section, first we shall prove the global existence of weak solutions in time for the initial value problem \eqref{pb} and establish a solution semiflow of the memristive neural networks modeled by \eqref{NW}.  Then we show the existence of an absorbing set of this semiflow in the state spaces $E$, which exhibits the dissipative dynamics of this memristive neural network semiflow.

\begin{theorem} \label{T1}
	Under the Assumption \eqref{Asp1}, for any initial state $g^0 \in E$, there exists a unique global weak solution in time, $g(t; g^0) = \textup{col}\, (u_i(t), v_i(t), w_i(t), \rho_i(t): 1 \leq i \leq m), t \in [0, \infty)$, to the initial value problem \eqref{pb} of the memristive neural network equations \eqref{NW}. 
\end{theorem}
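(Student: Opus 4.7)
The plan is to invoke Proposition \ref{pps} to obtain a local weak solution $g(t;g^0)$ on some interval $[0,\tau_{\max})$ and then extend it globally via an \emph{a priori} uniform $E$-norm bound. The local existence theorem carries the standard blow-up alternative: either $\tau_{\max}=+\infty$ or $\|g(t)\|_E\to\infty$ as $t\uparrow\tau_{\max}$, so an a priori bound suffices.

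First I would derive a basic energy identity. Taking the $L^2(\gw)$ inner product of the $u_i$-equation in \eqref{NW} with $u_i$, of the $z_i$-equation with $z_i$, and of the $\rho_i$-equation with $\rho_i$, using the Neumann condition \eqref{nbc} to integrate the Laplacian by parts, and summing over $1\leq i\leq m$, yields
\begin{equation*}
\tfrac{1}{2}\tfrac{d}{dt}\|g(t)\|^2 + \eta \sum_{i=1}^m \|\nb u_i\|^2 + b \sum_{i=1}^m \|\rho_i\|^2 = \sum_{i=1}^m \int_\gw \bigl[f(u_i,z_i)u_i + h(u_i,z_i)\cdot z_i + \inpt{\gL z_i,z_i} - k\tanh(\rho_i)u_i^2 - Pu_i^2\sum_{j} \G(u_j) + a u_i \rho_i\bigr]\,dx.
\end{equation*}
The synaptic coupling contribution is nonpositive since $\G\geq 0$, and the memristive term is bounded in absolute value by $k\|u_i\|^2$ because $|\tanh|\leq 1$. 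Applying the first line of \eqref{Asp1}, the dissipation $\inpt{\gL z_i,z_i}\leq -\ga|z_i|^2$ and the growth bound on $h$ from \eqref{Asp2}, the right-hand side is majorized by
\begin{equation*}
\sum_{i=1}^m \Bigl(-\ap\|u_i\|_{L^4}^4 + \gl\|u_i\|\,\|z_i\| - \ga\|z_i\|^2 + q\int_\gw |u_i|^2|z_i|\,dx + L\int_\gw |z_i|\,dx + k\|u_i\|^2 + a\int_\gw |u_i||\rho_i|\,dx + J|\gw|\Bigr).
\end{equation*}

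The crux is to absorb every indefinite cross term into the three dissipative terms $-\ap\|u_i\|_{L^4}^4$, $-\ga\|z_i\|^2$, and $-b\|\rho_i\|^2$ via Young's inequality \eqref{yg}. The most delicate piece is the cubic term from $h$: writing
\[
q|u_i|^2|z_i| \leq \tfrac{\ap}{2}|u_i|^4 + \tfrac{q^2}{2\ap}|z_i|^2,
\]
half of the quartic dissipation tames it, while the residual $\tfrac{q^2}{2\ap}\|z_i\|^2$ is absorbed into $\ga\|z_i\|^2$, eating part of its coefficient. The remaining cross terms $\gl|u_i||z_i|$, $au_i\rho_i$, $L|z_i|$, and $ku_i^2$ are split by \eqref{yg} as $\ve|z_i|^2 + C_\ve|u_i|^2$, $\tfrac{b}{2}|\rho_i|^2 + C|u_i|^2$, $\ve|z_i|^2 + C_\ve$, and $ku_i^2$ respectively; then every residual $|u_i|^2$ contribution is fed into the remaining quartic dissipation via $C|u_i|^2\leq \tfrac{\ap}{4}|u_i|^4 + C'$. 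With $\ve>0$ chosen small enough, the net coefficients of $\|u_i\|_{L^4}^4$, $\|z_i\|^2$, and $\|\rho_i\|^2$ remain strictly positive.

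Combining these bounds produces a differential inequality
\begin{equation*}
\tfrac{d}{dt}\|g(t)\|^2 + c_1 \|g(t)\|^2 \leq M_1, \qquad t \in [0,\tau_{\max}),
\end{equation*}
for positive constants $c_1, M_1$ depending only on the system parameters and on $|\gw|$, but not on $g^0$. Gronwall's inequality then yields the uniform bound $\|g(t)\|^2 \leq e^{-c_1 t}\|g^0\|^2 + M_1/c_1$ throughout $[0,\tau_{\max})$, which precludes blow-up and therefore forces $\tau_{\max}=+\infty$. Uniqueness and continuous dependence on $g^0$ in $E$ are inherited directly from Proposition \ref{pps}. The hardest part is the Young-balancing bookkeeping: one must verify that the single coefficient $\ap$ of the quartic dissipation is large enough to absorb simultaneously the cubic cross-term $q|u_i|^2|z_i|$ \emph{and} every quadratic residue $C|u_i|^2$ generated by the remaining cross terms while leaving a strictly positive prefactor in front of $\|u_i\|_{L^4}^4$.
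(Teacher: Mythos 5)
Your overall strategy (local existence from Proposition \ref{pps}, an \emph{a priori} $E$-bound, and continuation past any finite time) is the same as the paper's, and most of your Young-inequality bookkeeping is sound. But there is a genuine gap at precisely the step you flag as the crux: the cubic cross term $q|u_i|^2|z_i|$. You split it as $q|u_i|^2|z_i|\leq \frac{\ap}{2}|u_i|^4+\frac{q^2}{2\ap}|z_i|^2$ and then assert that the residue $\frac{q^2}{2\ap}\|z_i\|^2$ is ``absorbed into $\ga\|z_i\|^2$, eating part of its coefficient.'' That absorption requires $\frac{q^2}{2\ap}<\ga$, and no such relation among $q$, $\ap$, $\ga$ is available: Assumptions \eqref{Asp1}--\eqref{Asp2} only require these constants to be positive. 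More generally, any Young split $q u_i^2|z_i|\leq \ve u_i^4+\frac{q^2}{4\ve}z_i^2$ that is to be dominated by the available dissipation $\ap u_i^4+\ga z_i^2$ (with room left over for the other cross terms) forces $\frac{q^2}{4\ga}<\ve<\ap$, i.e.\ $q^2<4\ap\ga$; so no choice of $\ve$ rescues the argument when $q$ is large relative to $\ap\ga$. In the Hindmarsh--Rose instance of Section 6, $q=\gb_1+q_1$ can be large while $\ap$ is fixed by $b_1$ alone, so this is a real restriction and not a technicality.

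The paper closes exactly this hole by testing the $u_i$-equation with $C_1 u_i$ rather than $u_i$, i.e.\ by working with the weighted (but equivalent) energy $\sum_i\bigl(C_1\|u_i\|^2+\|z_i\|^2+\|\rho_i\|^2\bigr)$. The quartic dissipation then carries the coefficient $C_1\ap$, and the choice \eqref{C1}, $C_1=\frac{1}{\ap}\bigl(1+\frac{q^2}{4\ga}\bigr)$, guarantees $C_1\ap-\frac{q^2}{4\ga}=1>0$ for \emph{all} admissible parameters; the leftover quadratic terms $C\|u_i\|^2$ are then handled by completing the square as in \eqref{kiq}, which is the rigorous version of your $C|u_i|^2\leq\frac{\ap}{4}|u_i|^4+C'$ step. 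If you introduce such a scaling constant (equivalently, a weighted norm) your argument goes through and yields the same Gronwall inequality \eqref{GZ}; without it, the differential inequality $\frac{d}{dt}\|g\|^2+c_1\|g\|^2\leq M_1$ cannot be derived for general positive parameters.
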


\begin{proof}
	Take the $L^2$ inner-products of the $u_i$-equation in \eqref{NW} with $C_1 u_i(t, x)$ for $1 \leq i \leq m$, with a scaling constant $C_1 > 0$ to be chosen later. Then sum them up. By using the Gauss divergence theorem and the boundary condition \eqref{nbc} to treat the Laplacian term and by the Assumprion \eqref{Asp1}, we can get		
\begin{equation} \bl{ui}
		\begin{split}
			&\frac{C_1}{2} \frac{d}{dt} \sum_{i=1}^m \|u_i (t)\|^2 + C_1 \sum_{i=1}^m \eta \|\nb u_i (t) \|^2 \\
			= &\, C_1 \sum_{i=1}^m \int_\gw \left[ f(u_i, z_i) u_i - k \tanh (\rho_i) u_i^2 - \sum_{j=1, j \neq i}^m \frac{Pu_i^2}{1 + \exp [-r (u_j - V)]}\right] dx  \\
			\leq &\, C_1 \sum_{i=1}^m \int_\gw \left[ - \ap |u_i |^4 + \gl |u_i| |z_i| + J - k \tanh(\rho_i) u_i^2 - \sum_{j=1, j \neq i}^m \frac{Pu_i^2}{1 + \exp [-r (u_j - V)]} \right] dx \\
			\leq &\, C_1 \sum_{i=1}^m \int_\gw \left[ - \ap |u_i|^4 + k |u_i |^2 + \gl |u_i| |z_i| + J \right] dx     \\ 
			\leq & - C_1 \ap \sum_{i=1}^m \int_\gw u_i^4 (t, x)\, dx + \left(C_1 k + \frac{C_1^2 \gl^2}{\ga}\right) \sum_{i=1}^m \|u_i \|^2 + \frac{\ga}{4} \sum_{i=1}^m \|z_i \|^2 + C_1 mJ\, |\gw|,
		\end{split}
\end{equation}
because $$ - \sum_{j=1, j \neq i}^m \frac{Pu_i^2}{1 + \exp [-r (u_j - V)]} \leq 0,$$ where the Young's inequality \eqref{yg} and the property $|\tanh (\rho_i)| \leq 1$ are used. Then sum up the $L^2$ inner-products of the $z_i$-equation with $z_i(t, x)$ and the $\rho_i$-equation with $\rho_i(t, x)$ in \eqref{NW}, $1 \leq i \leq m$, again by \eqref{yg}, we have
\beq \bl{zi}
	\begin{split}
		&\frac{1}{2} \frac{d}{dt} \sum_{i=1}^m \left(\| z_i (t)\|^2 + \|\rho_i(t)\|^2\right) = \sum_{i=1}^m \int_\gw \left(\langle \gL z_i, z_i \rangle + h(u_i, z_i)z_i + a u_i \rho_i - b \rho_i^2 \right) dx  \\
		\leq &\, \sum_{i=1}^m \int_\gw \left[- \ga |z_i|^2 + q |u_i|^2 |z_i| + L |z_i |  + a u_i \rho_i - b\, \rho_i^2 \right] dx \\
		\leq &\, \sum_{i=1}^m \frac{q^2}{4\ga} \int_\gw u_i^4 (t, x)\, dx - \ga \sum_{i=1}^m \|z_i\|^2 + \frac{a^2}{2b}  \sum_{i=1}^m \|u_i \|^2 - \frac{b}{2}  \sum_{i=1}^m \|\rho_i \|^2 + \frac{\ga}{4} \|z_i \|^2 + \frac{mL^2}{\ga} |\gw|  \\ 
		= &\, \sum_{i=1}^m \frac{q^2}{4\ga} \int_\gw u_i^4 (t, x)\, dx - \frac{3\ga}{4} \sum_{i=1}^m \|z_i\|^2 + \frac{a^2}{2b}  \sum_{i=1}^m \|u_i \|^2 - \frac{b}{2}  \sum_{i=1}^m \|\rho_i \|^2  + \frac{mL^2}{\ga} |\gw |.
	\end{split}
\eeq 
Both inequalities \eqref{ui} and \eqref{zi} are valid in the time interval $I_{max} (g^0) = [0, T_{max})$ of solution existence for each weak solution $g(t; g^0))$. 
	
Now we add the above two inequalities \eqref{ui} and \eqref{zi} to obtain
\beq \bl{uw}
	\begin{split}
		& \frac{1}{2}\frac{d}{dt} \sum_{i = 1}^m \left(C_1 \|u_i (t)\|^2 + \|z_i (t)\|^2 + \| \rho_i (t)\|^2\right) + C_1 \eta \sum_{i=1}^m \|\nb u_i (t)\|^2    \\[5pt]
		\leq & - C_1 \ap \sum_{i=1}^m \int_\gw u_i^4 (t, x)\, dx + \left(C_1 k + \frac{C_1^2 \gl^2}{\ga}\right) \sum_{i=1}^m \|u_i \|^2 + \frac{\ga}{4} \sum_{i=1}^m \|z_i \|^2 + C_1 mJ |\gw| \\[5pt]
		& + \sum_{i=1}^m \frac{q^2}{4\ga} \int_\gw u_i^4 (t, x)\, dx + \frac{a^2}{2b} \sum_{i=1}^m \|u_i \|^2 - \frac{3\ga}{4} \sum_{i=1}^m \|z_i\|^2 - \frac{b}{2} \|\rho_i \|^2 + \frac{mL^2}{\ga} |\gw | \\[5pt]
		= &- \left(C_1 \ap -  \frac{q^2}{4\ga}\right) \sum_{i=1}^m \int_\gw u_i^4 (t, x)\, dx + \left(C_1 k +  \frac{C_1^2 \gl^2}{\ga} + \frac{a^2}{2b}\right) \sum_{i=1}^m \int_\gw u_i^2 (t, x)\, dx \\[5pt]
		& - \frac{\ga}{2} \|z_i \|^2 - \sum_{i=1}^m \frac{b}{2} \|\rho_i \|^ 2 + m\left(C_1 J + \frac{L^2}{\ga}\right) |\gw|,  \quad t \in I_{max} = [0, T_{max}).
	\end{split} 
\eeq
Choose the scaling constant $C_1$ to be
\beq \bl{C1}
	C_1 = \frac{1}{\ap} \left(1 + \frac{q^2}{4\ga} \right) \quad \text{so that} \quad C_1 \ap - \frac{q^2}{4\ga} = 1.
\eeq
With this choice, from \eqref{uw} it follows that 
	\beq \bl{u1w}
	\begin{split}
		& \frac{1}{2}\frac{d}{dt} \sum_{i = 1}^m \left(C_1 \|u_i\|^2 +  \|z_i \|^2  + \|\rho_i \|^2\right) + C_1 \eta \sum_{i=1}^m \|\nb u_i\|^2 \\[5pt]
		& + \sum_{i=1}^m \int_\gw u_i^4 (t, x)\, dx - \left(C_1 k + \frac{C_1^2 \gl^2}{\ga} + \frac{a^2}{2b} \right) \sum_{i=1}^m \int_\gw u_i^2 (t, x)\, dx \\[5pt]
		& + \frac{\ga}{2} \sum_{i=1}^m \|z_i \|^2 + \frac{b}{2} \sum_{i=1}^m \|\rho_i \|^2 \leq m \left(C_1 J + \frac{L^2}{\ga}\right) |\gw |,  \quad  t \in I_{max} = [0, T_{max}).
	\end{split}
	\eeq
By completing square, we have
\beq \bl{kiq}
	\begin{split}
		& \sum_{i=1}^m \int_\gw u_i^4 (t, x)\, dx - \left(C_1 k + \frac{C_1^2 \gl^2}{\ga} + \frac{a^2}{2b} \right) \sum_{i=1}^m \int_\gw u_i^2 (t, x)\, dx \\
		= &\, \sum_{i=1}^m \int_\gw \left(u_i^4 (t, x) - \left(C_1 k + \frac{C_1^2 \gl^2}{\ga} + \frac{a^2}{2b} \right) u_i^2 (t, x)\right) dx \\
		= &\, \sum_{i=1}^m \int_\gw \left(u_i^2(t, x) -  \frac{1}{2} \left(C_1 k + \frac{C_1^2 \gl^2}{\ga} + \frac{a^2}{2b} + 1\right) \right)^2 dx  \\
		& + \sum_{i=1}^m \|u_i \|^2 - \frac{m}{4} \left(C_1 k + \frac{C_1^2 \gl^2}{\ga} + \frac{a^2}{2b} + 1\right)^2 |\gw |  \\
		\geq &\, \sum_{i=1}^m \|u_i \|^2 - \frac{m}{4} \left(C_1 k + \frac{C_1^2 \gl^2}{\ga} + \frac{a^2}{2b} + 1\right)^2 |\gw |.
	\end{split}
\eeq
Substitute \eqref{kiq} in \eqref{u1w}. It yields the inequality
\beq \bl{u2w}
	\begin{split}
		& \frac{1}{2}\frac{d}{dt} \sum_{i = 1}^m \left(C_1 \|u_i\|^2 +  \| z_i \|^2  + \|\rho_i \|^2\right) + C_1 \eta \sum_{i=1}^m \|\nb u_i\|^2  \\
		& + \sum_{i=1}^m  \left(\|u_i \|^2 + \frac{\ga}{2} \|z_i \|^2 + \frac{b}{2} \|\rho_i \|^2\right) \leq  C_2 m |\gw |, \quad  t \in I_{max}.
	\end{split}
\eeq
Denote by
\beq \bl{C2}
	C_2 = C_1J + \frac{L^2}{\ga} + \frac{1}{4} \left(C_1 k + \frac{C_1^2 \gl^2}{\ga} + \frac{a^2}{2b} + 1\right)^2. 
\eeq
We can remove the nonnegative term $C_1 \eta \sum_{i=1}^m \|\nb u_i\|^2$ from \eqref{u2w} to obtain the Gronwall-type differential inequality:  
\beq \bl{GZ}
	\begin{split}
		&\frac{d}{dt} \sum_{i = 1}^m \left[ C_1\|u_i\|^2 + \| z_i \|^2 + \|\rho_i \|^2 \right] +\mu \sum_{i = 1}^m \left[C_1\|u_i\|^2 + \| z_i \|^2 + \|\rho_i \|^2 \right]  \\
		\leq &\, \frac{d}{dt} \sum_{i = 1}^m \left[ C_1\|u_i\|^2 + \|z_i \|^2 + \|\rho_i \|^2 \right] + \sum_{i=1}^m \left(2\|u_i \|^2 + \ga \|z_i \|^2 + b \|\rho_i \|^2\right) \\[5pt]
		\leq &\, 2C_2 m |\gw |,  \quad \text{for} \:\, t \in I_{max} = [0, T_{max}),
	\end{split}
\eeq
where 
\beq \bl{mu}
	\mu = \min  \left\{ \frac{2}{C_1}, \; \ga, \; b \right\} = \min \left\{\frac{8 \ap \ga}{4\ga + q^2},\; \ga,\; b \right\}.
\eeq
Now solve the differential inequality \eqref{GZ} to obtain the following bounding estimate of all the weak solutions on the maximal existence time interval $I_{max}$,
\beq \label{dse}
	\begin{split}
		& \|g(t, g^0)\|^2 = \sum_{i=1}^m \|g_i (t, g_i^0)\|^2 = \sum_{i=1}^m \left(\|u_i (t)\|^2 + \|w_i (t)\|^2 + \|\rho_i (t)\|^2 \right) \\
		\leq &\, \frac{\max \{C_1, 1\}}{\min \{C_1, 1\}}e^{- \mu \,t} \|g^0 \|^2 +  \frac{2C_2 m}{\mu \min \{C_1, 1\}} |\gw |, \quad \; t \in [0, \infty).
	\end{split}
\eeq
Here it is shown that $I_{max} = [0, \infty)$ for every weak solution $g(t, g^0)$ because the solution will never blow up at any finite time. The uniqueness of any weak solution to the initial value problem \eqref{pb} is shown in Proposition \ref{pps}. Therefore, for any initial data $g^0 = (g_1^0, \cdots, g_m^0) \in E$, there exists a unique global weak solution of the initial value problem \eqref{pb} for this memristive reaction-diffusion neural network model \eqref{NW}-\eqref{inc} in the space $E$ for time $t \in [0, \infty)$.
\end{proof}

Based on the global existence of weak solutions established in Theorem \ref{T1}, we can define the solution semiflow $\{S(t): E \to E\}_{t \geq 0}$ of the memristive neural network system \eqref{NW} to be
$$
S(t): g^0 \longmapsto g(t; g^0) = \text{col}\, (u_i (t, \cdot), z_i (t, \cdot), \rho_i (t, \cdot): 1 \leq i \leq m), \quad t \geq 0.
$$
We shall call this semiflow $\{S(t)\}_{t \geq 0}$ the \emph{memristive reaction-diffusion neural network semiflow} generated by the model equations \eqref{NW}. 

The next theorem shows that the memristive reaction-diffusion neural network semiflow $\{S(t)\}_{t \geq 0}$ is a dissipative dynamical system in the state space $E$.
\begin{theorem} \label{Eab}
There exists a bounded absorbing set for the memristive reaction-diffusion neural network semiflow $\{S(t)\}_{t \geq 0}$ in the state space $E$, which is the bounded ball 
\beq \label{Br}
	B^* = \{ g \in E: \| g \|^2 \leq K\}
\eeq 
where the constant 
\beq \bl{K}
	K = 1 +   \frac{2C_2 m}{\mu \min \{C_1, 1\}} |\gw |,
\eeq
and the positive constants $C_1$ and $C_2$ are given in \eqref{C1} and \eqref{C2}.
\end{theorem}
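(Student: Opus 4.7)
The plan is to leverage the \emph{a priori} dissipative estimate \eqref{dse} already established in the proof of Theorem \ref{T1}, which is effectively a global-in-time Gronwall bound of the form $\|g(t; g^0)\|^2 \leq M_1 e^{-\mu t}\|g^0\|^2 + M_0$, where $M_0 = \frac{2C_2 m}{\mu \min\{C_1,1\}}|\gw|$ and $M_1 = \frac{\max\{C_1,1\}}{\min\{C_1,1\}}$. Since the semiflow $\{S(t)\}_{t \geq 0}$ is defined by $S(t)g^0 = g(t; g^0)$, verifying Definition \ref{Dabsb} for the ball $B^* = \{g \in E: \|g\|^2 \leq K\}$ with $K = 1 + M_0$ reduces to checking the absorption property for arbitrary bounded sets $B \subset E$.

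First I would fix a bounded set $B \subset E$ and let $R = \sup\{\|g^0\| : g^0 \in B\} < \infty$. Applying \eqref{dse} uniformly over $g^0 \in B$, I would obtain
\beq
\|S(t)g^0\|^2 \leq M_1 e^{-\mu t} R^2 + M_0, \quad t \geq 0, \; g^0 \in B.
\eeq
Then I would solve for the threshold time $T_B$ that forces the transient term $M_1 e^{-\mu t} R^2$ below $1$, namely
$$
T_B = \max\left\{0, \; \frac{1}{\mu} \ln\left(M_1 R^2\right)\right\},
$$
so that for every $t > T_B$ one has $\|S(t)g^0\|^2 \leq 1 + M_0 = K$, i.e.\ $S(t)B \subset B^*$. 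This is exactly the absorbing condition required by Definition \ref{Dabsb}, and it holds uniformly for all $g^0 \in B$ since $R$ is finite.

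There is really no hard step here; the substantive analytic work, namely the coercive energy estimate and the completion-of-squares argument that produced the inequality \eqref{GZ} with the dissipation rate $\mu$ in \eqref{mu} and the absolute constant $C_2$ in \eqref{C2}, has already been carried out in Theorem \ref{T1}. The only minor care needed is to record that the bound is independent of the individual solution $g^0 \in B$ (so that a single $T_B$ serves the whole set) and to note that $B^*$ itself is bounded in $E$ (trivially so, by construction, since $K$ depends only on the system parameters $\eta, a, b, k, \ap, \gl, J, \ga, q, L$, the number of neurons $m$, and the domain measure $|\gw|$, not on $B$ or $g^0$). Thus $B^*$ is a fixed bounded absorbing set in $E$, which establishes dissipativity of the memristive reaction-diffusion neural network semiflow.
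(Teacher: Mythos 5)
Your proposal is correct and follows essentially the same route as the paper: both invoke the decay estimate \eqref{dse} from Theorem \ref{T1}, absorb the transient term $M_1 e^{-\mu t}R^2$ below $1$ after the explicit time $T_B = \frac{1}{\mu}\log^+\left(M_1 R^2\right)$, and conclude $S(t)B \subset B^*$ with $K = 1 + M_0$. No gaps; your bookkeeping of $R^2$ versus the paper's $\mathcal{R}$ is just a notational difference.
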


\begin{proof}
This is the consequence of the global uniform estimate \eqref{dse} shown in Theorem \ref{T1}, which implies that
\beq \label{lsp}
	\limsup_{t \to \infty} \|g(t, g^0)\|^2 = \limsup_{t \to \infty} \, \sum_{i=1}^m \|g_i(t, g_i^0)\|^2 < K 
\eeq
for all weak solutions of \eqref{pb} with any initial data $g^0$ in $E$. Moreover, for any given bounded set $B = \{g \in E: \|g \|^2 \leq \mathcal{R}\}$ in $E$, there exists a finite time 
$$
	T_B = \frac{1}{\mu} \log^+ \left(\mathcal{R}\,\frac{\max \{C_1, 1\}}{\min \{C_1, 1\}}\right)
$$
such that all the solution trajectories started at the initial time $t = 0$ from the set $B$ will permanently enter the bounded ball $B^*$ shown in \eqref{Br} for $t > T_B$.  Therefore, the bounded ball $B^*$ is an absorbing set in $E$ for the semiflow $\{S(t)\}_{t \geq 0}$ so that this memristive neural network semiflow is dissipative.
\end{proof}

\section{\textbf{Higher-Order and Pointwise Ultimate Bounds}}

We shall further prove an ultimate uniform bound of the membrane potential functions $\{ u_i (t): 1 \leq i \leq m\}$ for all the weak solutions in the higher-order integrable space $L^4(\gw)$. Note that Proposition \ref{pps} and Theorem \ref{T1} together show that any weak solution $g(t) \in C((0, \infty), \Pi)$ so that the component function $u_i(t) \in C((0, \infty), H^1(\gw)) \subset C((0, \infty), L^6(\gw)) \subset C((0, \infty), L^4(\gw))$.

\begin{theorem} \bl{T4} 
	There exists a constant $Q > 0$ such that for any initial data $g^0 \in E$, the membrane potential components $u_i, 1 \leq i \leq m$, of the weak solution $g(t, g^0) = (g_1 (t), \cdots, g_m (t))$ of the initial value problem \eqref{pb} for the memristive reaction-diffusion neural network $\mathcal{NW}$ are ultimately uniform bounded in the space $L^4(\gw)$ and
\beq \bl{Lbd}
	\limsup_{t \to \infty}\, \sum_{i=1}^m \|u_i(t)\|^4_{L^4} < Q. 
\eeq
\end{theorem}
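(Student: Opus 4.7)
The plan is to obtain a Gronwall-type differential inequality for the $L^4$-norms of the membrane potentials $u_i$. Since by Proposition \ref{pps} every weak solution is in fact a strong solution in $\Pi$ for $t > 0$ and the Sobolev embedding $H^1(\gw) \hookrightarrow L^6(\gw)$ holds for $n \leq 3$, each computation below is rigorous for $t > 0$; the formal testing of a weak solution against $u_i^3$ can be justified either by working directly on $(0,\infty)$ with the strong solution or via a standard Galerkin approximation. First, I multiply the $u_i$-equation of \eqref{NW} by $u_i^3$, integrate over $\gw$, and sum over $i = 1, \dots, m$. Integration by parts together with \eqref{nbc} yields
\[
\eta \int_\gw u_i^3 \gd u_i \, dx = -3\eta \int_\gw u_i^2 |\nb u_i|^2 \, dx \leq 0,
\]
which I discard. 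The synaptic coupling contributes $-Pu_i^4\sum_j \G(u_j) \leq 0$ since $\G \geq 0$ and is likewise discarded, while the memristive term is controlled using $|\tanh(\rho_i)| \leq 1$ by $k\sum_i\|u_i\|^4_{L^4}$.

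Next, I apply the first bound in Assumption \eqref{Asp1} after writing $u_i^3 = u_i \cdot u_i^2$:
\[
f(u_i, z_i) u_i^3 \leq \bigl(-\ap u_i^4 + \gl |u_i||z_i| + J\bigr) u_i^2 = -\ap u_i^6 + \gl |u_i|^3|z_i| + J u_i^2.
\]
The cross term is split by Young's inequality \eqref{yg} as $\gl|u_i|^3|z_i| \leq (\ap/4)u_i^6 + C_1 z_i^2$. To convert the remaining negative $u_i^6$ integrals into coercivity in $L^4$, I use the elementary pointwise inequality $|s|^4 \leq \ggd |s|^6 + C(\ggd)$ for $s \in \mathbb{R}$, valid for any $\ggd > 0$ and obtained by maximizing $s^4 - \ggd s^6$. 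Integrating this gives
\[
\int_\gw u_i^6 \, dx \geq \frac{1}{\ggd}\|u_i\|^4_{L^4} - \frac{C(\ggd)}{\ggd}|\gw|,
\]
so that for $\ggd$ sufficiently small the retained share of $-\ap \int_\gw u_i^6$ absorbs both the memristive $+k\|u_i\|^4_{L^4}$ and the Young residue, leaving a strictly negative $-c\sum_i\|u_i\|^4_{L^4}$ term on the right.

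Assembling these estimates produces, for $t > 0$, an inequality of the form
\[
\frac{d}{dt}\sum_{i=1}^m \|u_i(t)\|^4_{L^4} + c\sum_{i=1}^m \|u_i(t)\|^4_{L^4} \leq C_3 \sum_{i=1}^m \bigl(\|u_i(t)\|^2 + \|z_i(t)\|^2\bigr) + C_4 m|\gw|,
\]
with positive constants $c, C_3, C_4$ determined by the parameters $\ap, \gl, J, k, q, \ga, b, |\gw|$. By the $L^2$ absorbing-set result of Theorem \ref{Eab}, there is a finite time $T^*$ beyond which $\sum_i(\|u_i(t)\|^2 + \|z_i(t)\|^2) \leq K$, so the right-hand side is uniformly bounded by some $M$ for all $t \geq T^*$. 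Gronwall's inequality on $[T^*,\infty)$ then yields $\limsup_{t\to\infty}\sum_i \|u_i(t)\|^4_{L^4} \leq M/c$, and taking $Q := M/c + 1$ establishes \eqref{Lbd}.

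The main obstacle is the balancing in the coercivity step: the memristive term produces a non-dissipative $+k\|u_i\|^4_{L^4}$ contribution, and splitting the cross term $\gl|u_i|^3|z_i|$ also consumes part of the $-\ap\int u_i^6$ budget. The two Young splittings (one for the cross term, one for the pointwise $u_i^6 \succeq u_i^4$ conversion) must therefore be calibrated with compatible small parameters so that a strictly positive coefficient in front of $-\sum_i\|u_i\|^4_{L^4}$ survives. Beyond this quantitative bookkeeping, the argument is a direct higher-order extension of the $L^2$ dissipative estimate proved in Theorem \ref{T1}.
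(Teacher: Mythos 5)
Your proposal is correct and follows essentially the same route as the paper: test the $u_i$-equation with $u_i^3$, discard the nonpositive diffusion and coupling contributions, use the first bound in \eqref{Asp1} to produce $-\ap\int u_i^6$, absorb the memristive $k u_i^4$ term and the cross term $\gl|u_i|^3|z_i|$ into that negative sextic budget via Young's inequality, convert $-\int u_i^6$ into $-c\|u_i\|_{L^4}^4$ by the pointwise inequality $s^4\le \ggd s^6+C(\ggd)$ (the paper uses the specific instance $-u_i^6\le -u_i^4+1$), control the residual $z_i$ and $u_i$ terms by the $L^2$ absorbing set of Theorem \ref{Eab}, and close with Gronwall on $[T^*,\infty)$ after invoking the parabolic regularity of Proposition \ref{pps} to ensure $u_i(T^*)\in L^4(\gw)$. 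The only differences are cosmetic bookkeeping choices (e.g.\ you keep $Ju_i^2$ and bound it by $K$ rather than folding it into the $u_i^6$ term).
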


\begin{proof}
Take the $L^2$ inner-product of the $u_i$-equation in \eqref{NW} with $u_i^3 (t, \cdot), 1 \leq i \leq m$, and sum them up. By the boundary condition \eqref{nbc} and Assumption \eqref{Asp1}, we have
\beq \label{uL4} 
	\begin{split}
		&\frac{1}{4}\, \frac{d}{dt} \sum_{i=1}^m \|u_i(t)\|^{4}_{L^{4}} + 3\eta \sum_{i=1}^m \|u_i \nb u_i \|^2_{L^2} \\
		= &\, \sum_{i=1}^m \int_\gw (f(u_i, x) u_i^3 - k \tanh (\rho_i) u_i^4)\, dx - \sum_{i=1}^m \sum_{j=1}^m \int_\gw \frac{Pu_i^4}{1 + \exp [-r (u_j - V)]}\, dx \\
		\leq &\, \sum_{i=1}^m \int_\gw \left(- \ap u_i^6 + \gl u_i^3 |z_i|  + J u_i^3  + k u_i^4 \right) dx, \quad t > 0.
	\end{split}
\eeq
By Young's inequality \eqref{yg}, it is seen that 
\beq \bl{vy} 
	\gl u_i^3 |z_i|  + J u_i^3  + k u_i^4 \leq \left(\frac{\ap}{4} u_i^6 + \frac{\gl^2}{\ap}z_i^2 \right) + \left(\frac{\ap}{4} u_i^6 + \frac{J^2}{\ap}\right)+ \left(\frac{\ap}{4}u_i^6 + \frac{64}{27\ap^2} k^3 \right),
\eeq
where the last two terms in a bracket come from 
$$
	k u_i^4 = \left[\frac{3\ap}{8} u_i^6\right]^{2/3} \left[\frac{64}{9\ap^2}k^3 \right]^{1/3} \leq \frac{2}{3} \left(\frac{3\ap}{8} u_i^6\right) + \frac{1}{3} \left(\frac{64}{9\ap^2}k^3 \right) = \frac{\ap}{4}u_i^6 + \frac{64}{27\ap^2} k^3.
$$
Note that 
\beq \bl{L46}
	u_i^4 \leq \frac{1}{3} + \frac{2}{3} u_i^6 \leq 1 + u_i^6 \quad \text{so that} \; - u_i^6 \leq - u_i^4 + 1.
\eeq
From \eqref{uL4} wherein we can use the inequalities \eqref{vy} and \eqref{lsp}, it follows that 
\beq \label{u3}
	\begin{split}
		&\frac{1}{4} \, \frac{d}{dt} \sum_{i=1}^m \|u_i(t)\|^{4}_{L^{4}} + 3\eta \sum_{i=1}^m \|u_i \nb u_i \|^2 \\
		\leq &\, \sum_{i=1}^m \int_\gw \left[ - \ap u_i^6 + \left(\frac{\ap}{4} u_i^6 + \frac{\gl^2}{\ap}z_i^2 \right) + \left(\frac{\ap}{4} u_i^6 + \frac{J^2}{\ap}\right)+ \left(\frac{\ap}{4}u_i^6 + \frac{64}{27\ap^2} k^3 \right) \right] dx \\
		= &\, \sum_{i=1}^m  \left(- \frac{\ap}{4} \int_\gw u_i^6 \, dx + \frac{\gl^2}{\ap} \|z_i (t)\|^2 \right) + m \left(\frac{J^2}{\ap} + \frac{64 k^3}{27\ap^2} \right) |\gw|  \\
		\leq &\, \sum_{i=1}^m  \left(- \frac{\ap}{4} \int_\gw u_i^4 \, dx + \frac{\gl^2}{\ap} \|z_i (t)\|^2 \right) + m \left(\frac{\ap}{4} + \frac{J^2}{\ap} + \frac{64 k^3}{27\ap^2} \right) |\gw|  \\
		< &\, - \frac{\ap}{4} \sum_{i=1}^m \|u_i (t) \|^4_{L^4} + \frac{\gl^2}{\ap}K + m \left(\frac{\ap}{4} + \frac{J^2}{\ap} + \frac{64 k^3}{27\ap^2} \right) |\gw|,   
	\end{split}
\eeq
for time $t$ sufficiently large, where the constant $K$ is given in \eqref{K}, which is valid for all the weak solutions. Consequently, with the nonnegative gradient terms removed, the differential inequality \eqref{u3} shows that
\beq \bl{Gu4} 
	\begin{split}
		\frac{d}{dt} \sum_{i=1}^m \|u_i(t)\|^{4}_{L^{4}} + \ap \sum_{i=1}^m \|u_i (t) \|^4_{L^4} &\leq \frac{4\gl^2}{\ap}K + m \left(\ap + \frac{4J^2}{\ap} + \frac{256 k^3}{27\ap^2} \right) |\gw|. \\
		&\, < \frac{4\gl^2}{\ap}K + m \left(\ap + \frac{4J^2}{\ap} + \frac{10 k^3}{\ap^2} \right) |\gw|,
	\end{split}
\eeq
for $t > T(g^0)$, where $T(g^0) > 0$ is a finite time when the solution trajectory started from the initial state $g^0$ be absorbed into the absorbing set $B^*$ in the state space $E$, as shown in Theorem \ref{Eab}. 
	
By the parabolic regularity stated in Proposition \ref{pps}, for any weak solution $g(t, g^0)$ one has $u_i (T(g^0)) \in H^1 (\gw) \subset L^4 (\gw)$ for $1 \leq i \leq m$. Then the second statement in Proposition \ref{pps} shows that any weak solution has the regularity 
$$
	\sum_{i=1}^m u_i (t) \in C([T(g^0), \infty), H^1 (\gw)) \subset C([T(g^0), \infty), L^4 (\gw)). 
$$
Apply the Gronwall inequality to \eqref{Gu4}. It results in the bounding estimate of all the $u_i$ components, $1 \leq i \leq m$, in the space $L^4 (\gw)$ as follows:
\beq \bl{L4B}
	\begin{split}
		\sum_{i=1}^m \|u_i(t)\|^{4}_{L^4} &\,< e^{- \ap (t - T(g^0))} \sum_{i=1}^m \|u_i (T(g^0))\|^{4}_{L^4} \\
		&\, + \frac{4\gl^2}{\ap^2}K + m \left(1 + \frac{4J^2}{\ap^2} + \frac{10 k^3}{\ap^3} \right) |\gw|,  \quad \text{for} \;\, t \geq 0,
	\end{split}
\eeq
Therefore \eqref{Lbd} is proved with 
\beq \bl{Q}
	Q = 1 + \frac{4\gl^2}{\ap^2}K + m \left(1 + \frac{4J^2}{\ap^2} + \frac{10 k^3}{\ap^3} \right) |\gw|.
\eeq 
which is a constant independent of any initial data.
\end{proof}

The pointwise estimation in the following theorem will be used to deal with the nonlinear weak coupling toward exponential synchronization featured in this work.

\begin{theorem} \bl{T5} 
	There exists a constant $G > 0$ such that for any initial data $g^0 \in E$, the membrane potential component $u_i(t,x), 1 \leq i \leq m$, of the weak solution $g(t, g^0) = (g_1 (t), \cdots, g_m (t))$ of the initial value problem \eqref{pb} for the memristive reaction-diffusion neural network $\mathcal{NW}$ is ultimately uniform bounded in the space $\mathbb{R}$ and
\beq \bl{Rbd}
	\limsup_{t \to \infty}\, \sum_{i=1}^m |u_i(t, x)|_\mathbb{R} < G, \quad \text{for} \;\; x \in \gw. 
\eeq
\end{theorem}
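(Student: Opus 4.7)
Plan: I would extend the $L^4$-energy argument of Theorem~\ref{T4} to an $L^{2p}$-estimate for every integer $p\geq 2$, and then combine it with a Moser-type iteration to pass to the pointwise bound \eqref{Rbd}. For each $p\geq 2$, take the $L^2$ inner product of the $u_i$-equation in \eqref{NW} with $|u_i|^{2p-2}u_i$, sum over $i=1,\dots,m$, apply the Neumann condition \eqref{nbc}, use the first part of Assumption \eqref{Asp1} multiplied by the nonnegative weight $|u_i|^{2p-2}$, bound the memristive term by $|\tanh|\le 1$, and drop the nonnegative coupling contribution $-P|u_i|^{2p}\sum_{j\ne i}\G(u_j)\le 0$ as in \eqref{uL4}. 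This yields
\begin{equation*}
\frac{1}{2p}\frac{d}{dt}\sum_{i=1}^m \|u_i\|_{L^{2p}}^{2p}+\eta(2p-1)\sum_{i=1}^m\int_{\gw}|\nb u_i|^2 u_i^{2p-2}\,dx \leq \sum_{i=1}^m\int_{\gw}\bigl[-\ap u_i^{2p+2}+\gl|u_i|^{2p-1}|z_i|+J u_i^{2p-2}+k u_i^{2p}\bigr]dx.
\end{equation*}
Young's inequality with exponents $\tfrac{2p+2}{2p-1}$ and $\tfrac{2p+2}{3}$ on $\gl|u_i|^{2p-1}|z_i|$, and analogous choices on $J u_i^{2p-2}$ and $k u_i^{2p}$, absorb the three positive lower-order terms into $\tfrac{\ap}{2}\int u_i^{2p+2}\,dx$ plus a constant term and a remainder of the form $C(p)\|z_i\|_{L^{(2p+2)/3}}^{(2p+2)/3}$.

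To handle the $z_i$-remainder I exploit the fact that the $z_i$-equation carries no diffusion and is therefore pointwise in $x\in\gw$ a linear ODE; Assumption \eqref{Asp2} combined with Young gives
$$\frac{d}{dt}|z_i(t,x)|^2+\ga\,|z_i(t,x)|^2\leq C(|u_i(t,x)|^4+1),$$
and pointwise Gronwall then converts any uniform $L^{2r}$-bound on $u_i$ into a uniform $L^r$-bound on $z_i$. Completing the square as in \eqref{kiq} and invoking the Hölder bound $\|u_i\|_{L^{2p}}^{2p}\leq |\gw|^{1/(p+1)}\|u_i\|_{L^{2p+2}}^{2p}$, the combined inequality becomes the super-linear Gronwall inequality
\begin{equation*}
\frac{d}{dt}X_p+c_p\,X_p^{1+1/p}\leq D_p,\qquad X_p:=\sum_{i=1}^m\|u_i\|_{L^{2p}}^{2p},
\end{equation*}
which forces $\limsup_{t\to\infty}X_p\leq M_p:=(D_p/c_p)^{p/(p+1)}$.

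Finally, starting from $\limsup_t X_2\leq Q$ supplied by Theorem~\ref{T4}, I iterate the above $L^{2p}$-bound along a geometrically increasing sequence $p_k\to\infty$ (alternating with the $z_i$-bootstrap of the previous paragraph and using the Sobolev embedding $H^1(\gw)\hookrightarrow L^6(\gw)$ available for $n\leq 3$, which turns the discarded gradient term $\int|\nb u_i|^2 u_i^{2p-2}\,dx$ into control of $\|u_i\|_{L^{6p}}^{2p}$). Moser iteration then produces a uniform-in-$k$ bound on $M_{p_k}^{1/(2p_k)}$ whose limit $G$ controls $\sum_i\|u_i(t)\|_{L^\infty(\gw)}$ ultimately and thereby yields \eqref{Rbd}. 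The main obstacle is precisely this iteration step: the $p$-dependence of $c_p$ and $D_p$---coming from the Young exponents, the Hölder constant $|\gw|^{1/(p+1)}$, and the $z_i$-bootstrap---must be tracked carefully so that $M_{p_k}^{1/(2p_k)}$ does not grow geometrically in $k$ but instead converges to a finite limit $G$ independent of the initial data.
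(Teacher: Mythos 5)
Your route (an $L^{2p}$ energy hierarchy plus Moser iteration) is genuinely different from the paper's, which never integrates over $\gw$ at all: the paper multiplies each equation by its own solution component \emph{pointwise} in $x$, reruns the algebra of Theorem \ref{T1} without the spatial integration, and obtains the pointwise Gronwall inequality \eqref{PGZ}, whence $\sum_i(u_i^2+z_i^2+\rho_i^2)(t,x)\leq e^{-\mu t}|g^0(x)|^2 + 2C_2m/(\mu\min\{C_1,1\})$ and the limsup bound follows at a.e.\ $x$ because $|g^0(x)|$ is finite a.e. Unfortunately your proposal has a concrete gap that stalls the iteration at exactly the exponent already covered by Theorem \ref{T4}. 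The $z_i$-equation carries no diffusion and (since $\pdr h/\pdr\gs=0$) reads $z_i(t)=e^{\gL t}z_i^0+\int_0^te^{\gL(t-s)}h(u_i(s))\,ds$; the first summand decays exponentially in $L^2(\gw)$ but \emph{never gains spatial integrability}, so for $g^0\in E$ the quantity $\|z_i(t)\|_{L^{(2p+2)/3}}$ appearing in your remainder is generically infinite for every $t$ once $(2p+2)/3>2$, i.e.\ for all $p>2$. Your proposed fix --- pointwise Gronwall for $|z_i(t,x)|^2$ converting an $L^{2r}$ bound on $u_i$ into an $L^r$ bound on $z_i$ --- fails for the same reason: the term $e^{-\ga t}|z_i^0(x)|^2$ in that Gronwall bound is only in $L^1(\gw)$, and its temporal decay does not improve its spatial integrability, so you cannot conclude $z_i(t)\in L^r$ for $r>2$ from $z_i^0\in L^2$.

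Nor can the offending cross term be absorbed directly: pairing $\gl|u_i|^{2p-1}$ against the merely-$L^2$ part of $z_i$ via Young requires $|u_i|^{2(2p-1)}$ to be dominated by the good term $\ap|u_i|^{2p+2}$, which holds only when $4p-2\leq 2p+2$, i.e.\ $p\leq 2$. This is precisely why the paper's $L^4$ estimate (Theorem \ref{T4}, $p=2$) closes but no higher $L^{2p}$ estimate does, and why the paper switches to a pointwise argument for Theorem \ref{T5} rather than bootstrapping in $p$. To rescue a Moser-type scheme you would first need a pointwise (or at least high-integrability) bound on $z_i$, but obtaining that is essentially equivalent to the pointwise coupled estimate the paper performs. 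The remaining ingredients of your sketch (the super-linear Gronwall inequality, the H\"older step $\|u\|_{L^{2p}}^{2p}\leq|\gw|^{1/(p+1)}\|u\|_{L^{2p+2}}^{2p}$, and the acknowledged need to track the $p$-dependence of the constants) are reasonable in themselves, but they do not address this obstruction.
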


\begin{proof}
	Similar to \eqref{ui} in the proof of Theorem \ref{T1}, we can multiply the $u_i$-equation in \eqref{NW} by $C_1 u_i (t, x)$ for $1 \leq i \leq m$, where $C_1$ is the same constant given in \eqref{C1}, and sum them up to get
\begin{equation} \bl{su} 
		\begin{split}
		&\frac{C_1}{2} \frac{d}{dt} \sum_{i=1}^m |u_i (t, x))|^2 + C_1 \sum_{i=1}^m \eta |\nb u_i (t,x)|^2 \\
		\leq & \sum_{i=1}^m \left[- C_1 \ap\, u_i^4 (t, x) + \left(C_1 k + \frac{C_1^2 \gl^2}{\ga}\right) u_i^2 (t,x) + \frac{\ga}{4} z_i^2 (t,x) \right] + C_1 mJ.
		\end{split}
\end{equation} 
Similar to \eqref{zi}, by multiplication and summation but without integration, we have
\beq \bl{sz}
	\begin{split}
	&\frac{1}{2} \frac{d}{dt} \sum_{i=1}^m \left(| z_i (t, x)|^2 + |\rho_i(t, x)|^2\right) \\
	\leq & \sum_{i=1}^m \left[\frac{q^2}{4\ga} u_i^4 (t, x) - \frac{3\ga}{4} z_i^2 (t, x) + \frac{a^2}{2b}u_i^2 (t, x) - \frac{b}{2} \rho_i^2 (t, x) \right] + \frac{mL^2}{\ga} .
	\end{split}
\eeq
Then parallel to the steps from \eqref{uw} through \eqref{GZ} in the proof of Theorem \ref{T1}, one can reach the pointwise differential inequality
\beq \bl{PGZ}
	\begin{split}
		&\frac{d}{dt} \sum_{i = 1}^m \left[ C_1 u_i^2(t,x) + z_i^2(t,x) + \rho_i^2(t,x) \right]  \\
		+ &\mu \sum_{i = 1}^m \left[C_1 u_i^2(t,x) + z_i^2(t, x) + \rho_i^2(t, x) \right] \leq 2C_2 m, \quad t > 0, \; x \in \gw,
	\end{split}
\eeq
where $C_2$ and $\mu$ are two universal positive constants given in \eqref{C2} and \eqref{mu} respectively. It follows that
\beq \bl{ubd}
	\sum_{i=1}^m \left(u_i^2 (t, x) + z_i^2 (t, x)+ \rho_i^2 (t, x) \right) \leq \frac{\max \{C_1, 1\}}{\min \{C_1, 1\}}e^{- \mu \,t} |g^0 (t, x)|^2 +  \frac{2C_2 m}{\mu \min \{C_1, 1\}},
\eeq
for $t > 0,x \in \gw$, which implies that \eqref{Rbd} is valid with a uniform constant
\beq \bl{G}
	G = \left[1 +  \frac{2C_2 m}{\mu \min \{C_1, 1\}} \right]^{1/2},
\eeq  
which is independent of any initial data. 
\end{proof}

\section{\textbf{Synchronization of Memristive Reaction-Diffusion Neural Networks}} 

In this section, we shall prove the main result on the synchronization of the memristive reaction-diffusion neural networks described by \eqref{NW} in the state space $E$. This result provides a quantitative threshold condition for the interneuron coupling strength to reach the neural network synchronization.

\begin{definition}
For a model evolutionary equation of a general neural network called \emph{GNW}, such as \eqref{pb} formulated from the memristive reaction-diffusion equations \eqref{NW}, we define the asynchronous degree of this neural network in a state space (as a Banach space) $W$ to be
$$
	deg_s \,(\text{\emph{GNW}})= \sum_{1\, \leq i \,< j\, \leq \,m} \left\{ \sup_{g_i^0, \, g_j^0\,  \in \, W} \, \left\{\limsup_{t \to \infty} \, \|g_i (t; g^0_i) - g_j (t; g^0_j)\|_W \right\}\right\}
$$ 
where $g_i (t)$ and $g_j (t)$ are any two solutions of this model evolutionary equation with the initial states $g_i^0$ and $g_j^0$ respectively for two neurons $\mathcal{N}_i$ and $\mathcal{N}_j$, $1 \leq i, j \leq m$, in the network. The neural network is said to be asymptotically synchronized if 
$$
	deg_s \,(\text{\emph{GNW}}) = 0.
$$
If the asymptotic convergence to zero of the difference norm for any two neurons in the network admits a uniform exponential rate, then the neural network is called exponentially synchronized. 
\end{definition}

Introduce the neuron difference functions: For $i, j = 1, \cdots, m$, define 
\begin{gather*}
	U_{ij} (t,x) = u_i(t,x) - u_j (t,x), \\
	Z_{ij} (t,x) = z_i(t,x) - z_j (t,x),  \\
	R_{ij} (t,x) = \rho_i(t,x) - \rho_j (t,x).
\end{gather*}
Given any initial state $g^0 = \text{col}\, (g_1^0, \cdots, g_m^0)$ in the space $E$, the difference between any two solutions of \eqref{pb} associated with two neurons $\mathcal{N}_i$ and $\mathcal{N}_j$ in the network is what we consider: 
$$
	g_i (t, g_i^0) - g_j (t, g_j^0) = \text{col}\, (U_{ij}(t, \cdot ), Z_{ij}(t, \cdot ), R_{ij}(t, \cdot)), \quad t \geq 0.
$$
By subtraction of the three governing equations for the $j$-th neuron from the corresponding governing equations for the $i$-th neuron in \eqref{NW}, we obtain the following differencing reaction-diffusion equations. For $i, j = 1, \cdots, m$,
\beq \bl{dHR} 
\begin{split}
	\frac{\pdr U}{\pdr t} = &\, \eta \gd U + f(u_i, z_i) - f(u_j, z_j) - k (\tanh (\rho_i)u_i - \tanh (\rho_j) u_j)  \\
	& - P \left[u_i \sum_{\nu =1}^m \G (u_\nu) - u_j \sum_{\nu =1}^m \G (u_\nu)\right]. \\
	\frac{\pdr Z}{\pdr t} = &\, \gL Z + h(u_i, z_i) - h(u_j, z_j), \\
	\frac{\pdr R}{\pdr t} = &\, aU - bR.
\end{split}
\eeq
Here and after, for any given $i$ and $j$, we shall simply write $U(t, x) = U_{ij}(t, x), Z (t, x) = Z_{ij}(t, x), R(t, x) = R_{ij}(t, x)$ as a notational convenience. 

The following exponential synchronization theorem is the main result of this paper.
\begin{theorem} \bl{ThM}
For memristive reaction-diffusion neural networks $\mathcal{NW}$ with the model \eqref{NW}-\eqref{nbc} and the Assumptions \eqref{Asp1}-\eqref{Asp2}, if the following threshold condition is satisfied by the coupling strength coefficient $P$,  
\beq \bl{SC} 
	\begin{split}
	P >  &\frac{1 + \exp [r (G + |V|)]}{m} \,\times   \\
	& \times \left[\gb + \frac{\gb^2 + \xi^2}{\ga} + k + \frac{a^2}{b} + 2\sqrt{2Q}\, C^* \left(\frac{k^2}{b}+ \frac{2\xi^2}{\ga}\right) + \frac{64 Q^2 C^{*4}}{\eta^3}\left[\frac{k^2}{b} + \frac{2\xi^2}{\ga}\right]^4\right] 
	\end{split}
\eeq
where the positive constant $Q$ and $G$ are given in \eqref{Q} and \eqref{G} respectively and $C^*$ is a coefficient in the Gagliardo-Nirenberg interpolation inequality \eqref{intp}, then the memristive neural network $\mathcal{NW}$ is exponentially synchronized in the state space $E$ at a uniform exponential rate 
\beq \bl{rate}
	\delta (P) = \min \left\{b,\, \ga,\, 2\left(\frac{mP}{1 + \exp [r (G + |V|)]} - \kappa \right) \right\},
\eeq
where the positive constant $\kappa$ is 
\beq  \bl{kap}
	\kappa = \gb + \frac{\gb^2 + \xi^2}{\ga} + k + \frac{a^2}{b} + 2\sqrt{2Q}\, C^* \left(\frac{k^2}{b} + \frac{2\xi^2}{\ga}\right) + \frac{64 Q^2 C^{*4}}{\eta^3}\left[\frac{k^2}{b} + \frac{2\xi^2}{\ga}\right]^4.
\eeq
\end{theorem}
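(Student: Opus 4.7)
The plan is to derive, for every pair $1 \leq i < j \leq m$, a single Gronwall-type differential inequality
\[
	\frac{d}{dt} \Phi_{ij}(t) + \delta(P)\, \Phi_{ij}(t) \leq 0, \qquad \Phi_{ij}(t) := \|U(t)\|^2 + \|Z(t)\|^2 + \|R(t)\|^2,
\]
valid for $t$ sufficiently large, by summing three $L^2$-energy estimates obtained from testing the three components of the differencing system \eqref{dHR} against $U$, $Z$, and $R$ respectively. The $Z$-equation uses $\langle \gL Z, Z\rangle \leq -\ga|Z|^2$ from \eqref{Asp2}, the $R$-equation is handled by Young's inequality \eqref{yg} applied to $2a\inpt{U,R}$ with split coefficients $(a^2/b,b)$, and the $U$-equation produces the dissipation $-2\eta\|\nb U\|^2$ via integration by parts with the boundary condition \eqref{nbc}.

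First I would treat the ``linear-looking'' reaction differences. The mean value theorem combined with the gradient bound $\max\{\pdr f/\pdr s,\,|\pdr f/\pdr \gs|\}\leq \gb$ of \eqref{Asp1} yields $2\int[f(u_i,z_i)-f(u_j,z_j)]U\,dx \leq 2\gb\|U\|^2 + 2\gb\int|U||Z|\,dx$, split by \eqref{yg} into $(2\gb+2\gb^2/\ga)\|U\|^2 + (\ga/2)\|Z\|^2$. The memristor difference is rewritten as $\tanh(\rho_i)u_i - \tanh(\rho_j)u_j = \tanh(\rho_i)U + [\tanh(\rho_i)-\tanh(\rho_j)]u_j$, and using $|\tanh|\leq 1$ with the Lipschitz bound $|\tanh(\rho_i)-\tanh(\rho_j)|\leq|R|$ this contributes $2k\|U\|^2$ plus a cubic cross term $2k\int|u_j||R||U|\,dx$ deferred below. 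Since \eqref{Asp2} forces $\pdr h/\pdr\gs=0$, the $h$-difference depends only on $U$, and $|\pdr h/\pdr s|\leq \xi(|s|+1)$ produces a second cross term $2\xi\int(|u_i|+|u_j|+1)|U||Z|\,dx$. Decisively, the coupling term benefits from the pointwise bound $|u_\nu(t,x)|<G$ of Theorem \ref{T5}, which forces $\G(u_\nu(t,x))\geq (1+\exp[r(G+|V|)])^{-1}$ and hence
\[
	-2P\int_\gw U^2 \sum_{\nu=1}^m \G(u_\nu)\,dx \leq -\frac{2mP}{1+\exp[r(G+|V|)]}\,\|U\|^2,
\]
the single term driving synchronization.

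The hard part will be controlling the two cubic cross terms $2k\int|u_j||R||U|\,dx$ and $2\xi\int(|u_i|+|u_j|+1)|U||Z|\,dx$. For each such integral I would first apply \eqref{yg} to push the $|R|$ or $|Z|$ factor into a reserved fraction of the dissipation ($b\|R\|^2/2$ or $\ga\|Z\|^2/4$), leaving a weighted quadratic-in-$U$ expression with weight proportional to $k^2/b$ or $\xi^2/\ga$. H\"older's inequality together with the ultimate $L^4$ bound from Theorem \ref{T4} gives $\int(|u_i|^2+|u_j|^2+1)U^2\,dx \leq 2\sqrt{Q}\,\|U\|_{L^4}^2$ for $t$ large. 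The Gagliardo-Nirenberg interpolation \eqref{intp} in dimension $n\leq 3$ then bounds $\|U\|_{L^4}^2$ by a term involving $\|\nb U\|$ and $\|U\|$ with constant $C^*$, and one further application of Young with conjugate exponents $(4/3,4)$ absorbs the $\|\nb U\|^2$ contribution into the reserved $\eta\|\nb U\|^2$ at the cost of a residual coefficient of the form $\frac{64 Q^2 C^{*4}}{\eta^3}\bigl[k^2/b+2\xi^2/\ga\bigr]^4$ for the $\|U\|^2$ term, reproducing the highest-degree contribution to $\kappa$ in \eqref{kap}; the lower-degree contribution $2\sqrt{2Q}\,C^*[k^2/b+2\xi^2/\ga]$ arises from the companion $\|U\|^2$-piece of the interpolation.

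Assembling the three estimates, the dissipation terms $2\eta\|\nb U\|^2$, $2\ga\|Z\|^2$, $2b\|R\|^2$ absorb all the $\|\nb U\|^2$, $\|Z\|^2$ and $\|R\|^2$ contributions produced above, leaving on the right-hand side exactly
\[
	2\Bigl(\kappa - \frac{mP}{1+\exp[r(G+|V|)]}\Bigr)\|U\|^2 - \ga\|Z\|^2 - b\|R\|^2,
\]
with $\kappa$ precisely \eqref{kap}. Under the threshold \eqref{SC} the coefficient of $\|U\|^2$ is strictly negative, producing $\frac{d}{dt}\Phi_{ij}+\delta(P)\Phi_{ij}\leq 0$ with $\delta(P)$ as in \eqref{rate}. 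Gronwall yields uniform exponential decay of $\Phi_{ij}$, and summing over pairs $1\leq i<j\leq m$ gives $\deg_s(\mathcal{NW})=0$ at uniform exponential rate $\delta(P)/2$ for the $E$-norm. The chief obstacle, beyond bookkeeping, is the interpolation step: the exponent $4$ in the final Young splitting must match the quartic dependence on the bracket $[k^2/b+2\xi^2/\ga]$ so that the denominator $\eta^3$ appears exactly and the power of $Q$ is $Q^2$; any weaker interpolation estimate either fails to absorb into $\eta\|\nb U\|^2$ or yields the wrong power of either $Q$ or $\eta$ in \eqref{kap}.
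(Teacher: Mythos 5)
Your proposal is correct and follows essentially the same route as the paper's proof: the same $L^2$ energy estimate on the differencing system with the mean value theorem and Assumptions \eqref{Asp1}--\eqref{Asp2}, the same reduction of the coupling term to $-\frac{2mP}{1+\exp[r(G+|V|)]}\|U\|^2$ via the pointwise bound of Theorem \ref{T5}, and the same treatment of the cubic cross terms by H\"older, the $L^4$ bound of Theorem \ref{T4}, Gagliardo--Nirenberg with $\theta=3/4$, and Young with exponents $(4/3,4)$ to absorb the gradient into $\eta\|\nb U\|^2$. The only cosmetic differences are the decomposition of the memristor term (you use the Lipschitz bound on $\tanh$ where the paper uses the mean value theorem, yielding the same $k\|U\|^2$ plus cross term) and your statement of the rate as $\delta(P)/2$ for the norm rather than $\delta(P)$ for the squared norm.
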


\begin{proof}
The proof will go through three steps. 
	
Step 1. Take the $L^2$ inner-products of the first equation in \eqref{dHR} with $U(t)$, the second equation in \eqref{dHR} with $Z(t)$, and the third equation in \eqref{dHR} with $R(t)$. Then sum them up and use the Assumptions \eqref{Asp1}-\eqref{Asp2} to get
\beq \bl{eG} 
	\begin{split}
		&\frac{1}{2} \frac{d}{dt} (\|U (t)\|^2 + \|Z (t)\|^2 + \|R (t)\|^2) + \eta \|\nb U(t)\|^2 + \ga\,\|Z(t)\|^2 + b \|R(t)\|^2  \\
		&+ P \int_\gw \left[u_i \sum_{\nu =1}^m \G (u_\nu) - u_j \sum_{\nu =1}^m \G (u_\nu)\right] U(t, x)\, dx \\
		= &\, \int_\gw (f(u_i, z_i) - f(u_j, z_j)) U\, dx - \int_\gw k (\tanh (\rho_i)u_i - \tanh (\rho_j) u_j) U]\, dx \\
		&\, + \int_\gw (h(u_i, z_i) - h(u_j, z_j)) Z\, dx  + \int_\gw aUR\, dx \\
		\leq &\, \int_\gw  \frac{\pdr f}{\pdr s} \left(\zeta u_i + (1- \zeta) u_j \right) U^2\, dx + \int_\gw \frac{\pdr f}{\pdr \gs} \left(\varsigma z_i +  (1 - \varsigma) z_j \right)UZ\,dx \\
		&\, + \int_\gw  \frac{\pdr h}{\pdr s} \left(\epsilon u_i + (1- \epsilon) u_j \right) UZ\, dx  \\
		&\, - k \int_\gw \left[\text{sech}^2 (\varepsilon \rho_i + (1 - \varepsilon)\rho_j) R\, u_i U + \tanh (\rho_j)U^2 \right] dx+ \int_\gw aUR\, dx \\
		\leq &\, \int_\gw \left(\gb (U^2 + |U Z|) + \xi (|u_i| + |u_j| + 1) |UZ| + k( |u_i| RU + U^2) + a UR\right) dx. \\
		\leq &\, \int_\gw \left(\gb + \frac{\gb^2 + \xi^2}{\ga} + k + \frac{a^2}{b}\right) U^2(t, x) dx + \frac{\ga}{4}\|Z(t)\|^2 + \frac{b}{4} \|R(t)\|^2  \\
		&\, + \int_\gw \xi (|u_i| + |u_j|) |UZ|\, dx + \int_\gw k |u_i| RU\, dx, \quad t > 0.	 
	\end{split}
\eeq 
where the mean value theorem in differentiation and the hyperbolic function properties $|\tanh (\rho_j)| \leq 1, \text{sech}^2 (\ap \rho_i + (1 - \ap)\rho_j) \leq 1$ are used and the numbers $\zeta, \varsigma, \epsilon, \varepsilon \in [0,1]$. 
	
Step 2. We treat the last two integral terms on the right-hand side of the inequality \eqref{eG}. By the H\"{o}lder inequality, 
\beq \bl{RU}
	\begin{split} 
		& \int_\gw k |u_i| RU\, dx \leq k \int_\gw \left(\frac{b}{4k} R^2(t, x) + \frac{k}{b} u_i^2 (t, x) U^2(t, x)\right) dx \\
		\leq &\, \frac{b}{4} \|R(t)\|^2 +  \frac{k^2}{b} \left[\int_\gw u_i^4 (t, x)\,dx\right]^{1/2} \left[\int_\gw U^4 (t, x)\,dx\right]^{1/2} \\
		= &\, \frac{b}{4} \|R(t)\|^2 +  \frac{k^2}{b} \|u_i (t)\|^2_{L^4} \|U(t)\|^2_{L^4}, \quad t > 0.
	\end{split} 
\eeq
Similarly we have
\beq \bl{UZ}
	\begin{split} 
		&\int_\gw \xi (|u_i| + |u_j|) |UZ|\, dx \leq \xi \int_\gw \left(\frac{\ga}{4\xi} Z^2(t, x) + \frac{2\xi}{\ga} (u_i^2 + u_j^2) U^2(t, x)\right) dx \\
		\leq &\, \frac{\ga}{4} \|Z(t)\|^2 +  \frac{2 \xi^2}{\ga} \left(\left[\int_\gw u_i^4\, dx\right]^{1/2} + \left[\int_\gw u_j^4\, dx\right]^{1/2}\right) \left[\int_\gw U^4 (t, x)\,dx\right]^{1/2} \\
		= &\, \frac{\ga}{4} \|Z(t)\|^2 +  \frac{2 \xi^2}{\ga} \left(\|u_i (t)\|^2_{L^4} + \|u_j (t)\|^2_{L^4}\right) \|U(t)\|^2_{L^4}, \quad t > 0.
	\end{split} 
\eeq
Substitute the term estimates \eqref{RU} and \eqref{UZ} into the differential inequality \eqref{eG}. We obtain
\begin{equation} \bl{mG}
		\begin{split} 
		&\frac{1}{2} \frac{d}{dt} (\|U (t)\|^2 + \|W (t)\|^2 + \|R (t)\|^2) + \eta \|\nb U(t)\|^2 + \frac{\ga}{2} \|Z(t)\|^2 + \frac{b}{2} \|R(t)\|^2   \\
		&+ P \int_\gw \left[u_i \sum_{\nu =1}^m \G (u_\nu) - u_j \sum_{\nu =1}^m \G (u_\nu)\right] U(t,x)\, dx \\
		\leq &\,\left[\gb + \frac{\gb^2 + \xi^2}{\ga} + k + \frac{a^2}{b}\right] \|U(t)\|^2  \\
		&\,+ \left[\left(\frac{k^2}{b} + \frac{2\xi^2}{\ga}\right) \|u_i(t)\|^2_{L^4}  + \frac{2\xi^2}{\ga}\|u_j(t)\|^2_{L^4}\right] \|U(t)\|^2_{L^4}, \quad  t > 0.
		\end{split} 
\end{equation}
	
The challenge is to handle the last two terms of $L^4$-norm products on the right-hand side of the above inequality \eqref{mG}. We exploit the Gagliardo-Nirenberg interpolation inequalities \cite[Theorem B.3]{SY} and \cite{BV}. It states that Sobolev embedding 
$$
	H^1 (\gw) \subset L^4 (\gw) \subset L^2 (\gw)
$$ 
implies
\beq \bl{intp}
	\begin{split}
		&\|U(t) \|^2_{L^4} \leq C^* \|U(t)\|_{H^1}^{2\theta} \|U(t)\|^{2(1 - \theta)}  \\[4pt]
		\leq &\, C^* (\|U(t)\|+ \|\nb U(t)\|)^{2\theta} \|U(t)\|^{2(1 - \theta)}  \\[2pt]
		\leq &\,C^* 2^{2\theta} (\|U(t)\|^{2\theta} + \|\nb U(t)\|^{2\theta}) \|U(t)\|^{2(1 - \theta)}  \\[2pt] 
		= &\, 2\sqrt{2}C^* \|U(t)\|^2 + 2\sqrt{2}C^* \|\nb U(t)\|^{3/2} \|U(t)\|^{2(1 - 3/4)} 
	\end{split}
\eeq
where an inequality in \cite[Theorem 4.7]{Bz} is used and the coefficient $C^*(\gw) > 0$ only depends on the spatial domain $\gw$. Here the interpolation index $\theta = 3/4$ is determined by 
$$
	- \frac{n}{4} \leq \theta \left(1 - \frac{n}{2} \right) - (1 - \theta)\, \frac{n}{2}, \quad \text{for} \; 1 \leq n = \dim \gw \leq 3,
$$
and the equality holds for $n = 3$. The interpolation inequality \eqref{intp} shows that 
\beq \bl{intU}
	\|U(t) \|^2_{L^4} \leq 2\sqrt{2}C^* \|U(t)\|^2 + 2\sqrt{2}C^* \|\nb U(t)\|^{3/2} \|U(t)\|^{1/2}.
\eeq
According to Theorem \ref{Eab} and Theorem \ref{T4}, we know that $\limsup_{t \to \infty} \|U(t)\|^2 < K$ and $\limsup_{t \to \infty} \sum_{i=1}^m \|u_i(t)\|^4_{L^4} < Q$. Thus for any given initial state $g^0 \in E$ there exists a finite time $T(g^0) \geq 0$ such that 
$$
	\sum_{i=1}^m \|u_i (t)\|^2_{L^4} < Q^{1/2}, \quad \text{for all} \;\; t > T(g^0).
$$
Therefore, by \eqref{intU} and Young's inequality \eqref{yg}, we achieve the estimate
\beq \bl{U42}
	\begin{split} 
		&\left[\left(\frac{k^2}{b} + \frac{2\xi^2}{\ga}\right) \|u_i(t)\|^2_{L^4}  + \frac{2\xi^2}{\ga}\|u_j(t)\|^2_{L^4}\right] \|U(t)\|^2_{L^4} \\
		= &\, \left[ \frac{k^2}{b} \|u_i(t)\|^2_{L^4} + \frac{2\xi^2}{\ga} \left(\|u_i(t)\|^2_{L^4} + \|u_j(t)\|^2_{L^4}\right)\right] \|U(t)\|^2_{L^4} \\ 
		\leq &\,\left(\frac{k^2}{b} + \frac{2\xi^2}{\ga}\right) Q^{1/2} 2\sqrt{2} C^* (\|U(t)\|^2 + \|\nb U(t)\|^{3/2} \|U(t)\|^{1/2})  \\
		\leq &\, \left(\frac{k^2}{b} + \frac{2\xi^2}{\ga}\right) Q^{1/2} 2\sqrt{2} C^* \|U(t)\|^2 + \eta \|\nb U(t)\|^{(3/2) \times (4/3)}   \\
		&\,+ \frac{1}{\eta^3}\left[\left(\frac{k^2}{b} + \frac{2\xi^2}{\ga}\right) Q^{1/2} 2\sqrt{2} C^* \|U(t)\|^{1/2}\right]^4 \\
		= &\,\eta \|\nb U(t)\|^2 + \left[2 \sqrt{2Q}\, C^* \left(\frac{k^2}{b} + \frac{2\xi^2}{\ga}\right) + \frac{64 Q^2 C^{*4}}{\eta^3}\left(\frac{k^2}{b} + \frac{2\xi^2}{\ga}\right)^4\right] \|U(t)\|^2, 
	\end{split} 
\eeq
for $t > T(g^0)$. 

Substitute \eqref{U42} in \eqref{mG} and then cancel the gradient terms $\eta \|\nb U(t)\|^2$ on two sides of that inequality. It follows that	
\begin{equation} \bl{MG}
		\begin{split} 
			&\frac{1}{2} \frac{d}{dt} (\|U (t)\|^2 + \|Z (t)\|^2 + \|R (t)\|^2) + \frac{\ga}{2} \|Z(t)\|^2 + \frac{b}{2} \|R(t)\|^2   \\[5pt]
			&+ P \int_\gw \left[u_i \sum_{\nu =1}^m \G (u_\nu) - u_j \sum_{\nu =1}^m \G (u_\nu)\right] U(t, x)\, dx \\
			\leq &\,\left[\gb + \frac{\gb^2 + \xi^2}{\ga} + k + \frac{a^2}{b} + 2\sqrt{2Q}\, C^* \left(\frac{k^2}{b} + \frac{2\xi^2}{\ga}\right) + \frac{64 Q^2 C^{*4}}{\eta^3}\left[\frac{k^2}{b} + \frac{2\xi^2}{\ga}\right]^4 \right] \|U(t)\|^2.
		\end{split} 
\end{equation}
	
Step 3. Another challenge is to handle the nonlinear difference term of the weak coupling on the left-hand side of the inequality \eqref{MG}. For any given $1 \leq i \neq j \leq m$, we have
\beq \bl{CD}
	\begin{split} 
		& P \int_\gw \left[u_i \sum_{\nu =1}^m \G (u_\nu) - u_j \sum_{\nu =1}^m \G (u_\nu)\right] U(t, x)\,dx \\[4pt]
		= &\,P \int_\gw \sum^m_{\nu = 1} \frac{u_i - u_j}{1 + \exp [- r (u_\nu - V)]} U(t, x)\,dx   \\[4pt]
		= &\,P \int_\gw \sum^m_{\nu = 1} \frac{U^2 (t, x)}{1 + \exp [- r (u_\nu - V)]} \,dx.  \\
	\end{split} 
\eeq
By Theorem \ref{T5} and \eqref{Rbd}, for each solution trajectory $g(t, g^0)$ there exists a finite time $\tau (g^0) > 0$ such that $\sum_{i=1}^m |u_i(t, x)|_\mathbb{R} < G$ for $t > \tau (g^0)$. Hence it holds that
\beq \bl{kbd}
	\frac{1}{1 + \exp\, [- r (u_\nu (t, x) - V)]}  \geq \frac{1}{1 + \exp\, [r (G + |V|)]}, \quad t > \tau(g^0),
\eeq
for all $1 \leq \nu \leq m$. Now substitute \eqref{CD} and \eqref{kbd} in the differential inequality \eqref{MG} on the left-hand side. We obtain

\begin{equation} \bl{HMG}
	\begin{split}
	&\frac{1}{2} \frac{d}{dt} (\|U (t)\|^2 + \|Z (t)\|^2 + \|R (t)\|^2) + \frac{\ga}{2} \|Z(t)\|^2 + \frac{b}{2} \|R(t)\|^2   \\
	& + \frac{mP}{1 + \exp\, [r (G + |V|)]} \|U(t)\|^2   \\
	= &\, \frac{1}{2} \frac{d}{dt} (\|U (t)\|^2 + \|Z (t)\|^2 + \|R (t)\|^2) + \frac{\ga}{2} \|Z(t)\|^2 + \frac{b}{2} \|R(t)\|^2   \\
	& + \frac{P}{1 + \exp\, [r (G + |V|)]} \int_\gw \sum^m_{\nu = 1} U^2 (t, x)\,dx  \\
	\leq &\, \frac{1}{2} \frac{d}{dt} (\|U (t)\|^2 + \|Z (t)\|^2 + \|R (t)\|^2) + \frac{\ga}{2} \|Z(t)\|^2 + \frac{b}{2} \|R(t)\|^2   \\
	&\, + P\, \sum^m_{\nu = 1} \int_\gw \frac{1}{1 + \exp\, [- r (u_\nu (t, x) - V)]} U^2(t, x)\,dx \\
	= &\,\frac{1}{2} \frac{d}{dt} (\|U (t)\|^2 + \|Z (t)\|^2 + \|R (t)\|^2) + \frac{\ga}{2} \|Z(t)\|^2 + \frac{b}{2} \|R(t)\|^2   \\
	&\, + P \int_\gw \left[u_i \sum_{\nu =1}^m \G (u_\nu) - u_j \sum_{\nu =1}^m \G (u_\nu)\right] U(t, x)\,dx \\
	\leq &\,\left[\gb + \frac{\gb^2 + \xi^2}{\ga} + k + \frac{a^2}{b} + 2\sqrt{2Q}\,C^*\left[\frac{k^2}{b} + \frac{2\xi^2}{\ga}\right] + \frac{64 Q^2 C^{*4}}{\eta^3}\left[\frac{k^2}{b} + \frac{2\xi^2}{\ga}\right]^4 \right] \|U(t)\|^2  
		\end{split} 
\end{equation}
for $t > \tau (g^0)$. From \eqref{HMG} and by the threshold condition \eqref{SC} stated in this theorem, it results in the following Gronwall-type inequality:
\beq \bl{Grw}
	\begin{split} 
		& \frac{d}{dt} (\|U (t)\|^2 + \|Z (t)\|^2 + \|R (t)\|^2) + \delta (P) (\|U (t)\|^2 + \|Z (t)\|^2 + \|R (t)\|^2)   \\[6pt]
		&\leq \frac{d}{dt} (\|U (t)\|^2 + \|W (t)\|^2 + \|R (t)\|^2) + \ga \|Z(t)\|^2 + b \|R(t)\|^2 + 2\left[\frac{mP}{1 + \exp\, [r (G + |V|)]} \right.  \\
		&\, \left. - \left[\gb + \frac{\gb^2 + \xi^2}{\ga} + k + \frac{a^2}{b} + 2\sqrt{2Q}\,C^*\left[\frac{k^2}{b} + \frac{2\xi^2}{\ga}\right] + \frac{64 Q^2 C^{*4}}{\eta^3}\left[\frac{k^2}{b} + \frac{2\xi^2}{\ga}\right]^4 \right]\right] \|U(t)\|^2   \\
		&\leq 0, \quad \text{for} \;\;  t > \tau (g^0).
	\end{split} 
\eeq
Denote by
$$
	\kappa = \gb + \frac{\gb^2 + \xi^2}{\ga} + k + \frac{a^2}{b} + 2\sqrt{2Q}\, C^*\left[\frac{k^2}{b} + \frac{2\xi^2}{\ga}\right] + \frac{64 Q^2 C^{*4}}{\eta^3}\left[\frac{k^2}{b} + \frac{2\xi^2}{\ga}\right]^4.
$$

Finally we can solve this linear Gronwall inequality \eqref{Grw} to reach the exponential synchronization result: For any initial state $g^0 \in E$ and any two neurons $\mathcal{N}_i$ and $\mathcal{N}_j$ in this memristive reaction-diffusion neural network model \eqref{NW}, their difference function $g_i(t; g_i^0) - g_j(t; g_j^0)$ converges to zero in the state space $E$ exponentially at a uniform convergence rate $\delta (P)$ shown below. Namely, for any $1 \leq i \neq  j \leq m$,
\beq \bl{Esyn} 
	\begin{split}
		\| g_i (t) - g_j (t) \|_E^2 & = \|u_i (t) - u_j (t)\|^2 + \|z_i(t) - z_j(t)\|^2 + \|\rho_i(t) - \rho_j(t)\|^2  \\[7pt]
		&= \|U_{ij}(t)\|^2 + \|Z_{ij}(t)\|^2 + \|R_{ij}(t)\|^2  \\[5pt]
		&\leq e^{- \delta (P) \,t } \left\|g_i^0 - g_j^0 \right\|^2  \to 0, \;\; \text{as} \;\, t \to \infty. 
	\end{split}
\eeq
Here the constant convergence rate in \eqref{Esyn} is 
\begin{equation*}
		\delta (P) = \min \left\{b,\, \ga,\, 2\left(\frac{mP}{1 + \exp\, [r (G + |V|)]} - \kappa \right) \right\},
\end{equation*}
which is exactly \eqref{rate}-\eqref{kap} stated in the threshold condition \eqref{SC} of this theorem. Hence it is proved that
\beq \bl{degs}
	deg_s (\mathcal{NW}) = \sum_{1 \,\leq \,i  \,\neq  \,j \,\leq \,m} \left\{\sup_{g^0\, \in \, E} \, \left\{\limsup_{t \to \infty} \|g_i (t) -g_j(t) \|^2_E \right\}\right\} = 0.
\eeq
The proof of this theorem is completed. 
\end{proof}

As a meaningful extension of Theorem \ref{ThM}, we can also prove the exponential synchronization of memristive reaction-diffusion neural networks denoted by $\mathbb{NW} = \{N_i: i = 1, 2, \cdots, m\}$ with the following model equations, cf. \cite{B, Co, M, PL},
\beq \bl{eNW} 
\begin{split}
	& \frac{\pdr u_i}{\pdr t} = \eta \gd u_i + f (u_i, z_i) - k \tanh (\rho_i) u_i - \sum_{j=1}^m \frac{P (u_i - u_e)}{1 + \exp [- r (u_j - V)]},  \\
	& \frac{\pdr z_i}{\pdr t}  =  \gL z_i + h(u_i, z_i),    \\[3pt]
	& \frac{\pdr \rho_i}{\pdr t} = a u_i - b \rho_i,
\end{split} 
\eeq
where the weak coupling terms involve  a constant $u_e \in \mathbb{R}$ called the reversal potential, on a bounded spacial domain $\gw$ and satisfy the same boundary conditions as specified in Section 1. 

\begin{theorem} \bl{TM}
	Assume that the nonlinear terms $f(s, \gs)$ and $h(s, \gs)$ in the memristive neural network model \eqref{eNW} are respectively scalar and vector polynomials and satisfy the same Assumptions \eqref{Asp1} and \eqref{Asp2}, then there exists a positive constant $\Psi > 0$ which depends only on the parameters including $u_e$ but independent of any initial data, such that if the threshold condition
	\beq \bl{PG}
	P > \Psi
	\eeq
is satisfied, then the solution semiflow of the memristive reaction-diffusion neural network $\mathbb{NW}$ will be exponentially synchronized in the same state space $E$ at a uniform convergence rate. 
\end{theorem}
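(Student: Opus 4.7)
The plan is to follow the three-step proof of Theorem \ref{ThM} with modifications only where the reversal potential $u_e$ enters. The pivotal observation, which preserves most of the earlier analysis, is that $u_e$ \emph{cancels identically} in the difference of the coupling terms for two neurons $\mathcal{N}_i$ and $\mathcal{N}_j$:
\[
-P(u_i - u_e)\sum_{\nu=1}^m \G(u_\nu) + P(u_j - u_e)\sum_{\nu=1}^m \G(u_\nu) = -P\, U \sum_{\nu=1}^m \G(u_\nu).
\]
Consequently the difference system \eqref{dHR} retains exactly the same structure, and the chain of estimates \eqref{CD}--\eqref{kbd} producing the coercive lower bound $\frac{mP}{1+\exp[r(G+|V|)]}\|U(t)\|^2$ on the left-hand side of the energy identity transfers verbatim.

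The real work is to reprove the dissipative results of Theorems \ref{T1}, \ref{Eab}, \ref{T4} and \ref{T5} for the modified system. After multiplying the $u_i$-equation by $u_i$ (or by $u_i^3$ in Theorem \ref{T4}), the new coupling contributes
\[
-P(u_i - u_e)\, u_i \sum_{j=1}^m \G(u_j) = -P\sum_{j=1}^m \G(u_j)\Bigl(u_i - \tfrac{u_e}{2}\Bigr)^{\!2} + \tfrac{P u_e^2}{4}\sum_{j=1}^m \G(u_j) \leq \tfrac{m P u_e^2}{4},
\]
where the bound uses $0 \leq \G(s) \leq 1$. Thus the only effect on \eqref{u2w}, \eqref{uL4} and \eqref{PGZ} is an additive constant, appropriately scaled, in the forcing; the Gronwall-type inequalities \eqref{GZ}, \eqref{Gu4} and \eqref{PGZ} persist with $C_2$ replaced by an enlarged constant $\widetilde C_2 = \widetilde C_2(u_e, P)$ that is linear in $P$. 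This yields larger but universal constants $\widetilde K$, $\widetilde Q$, $\widetilde G$ structurally identical to \eqref{K}, \eqref{Q}, \eqref{G}.

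Substituting these revised dissipative bounds into the three-step argument of Theorem \ref{ThM}---the $L^2$ energy identity for $(U,Z,R)$, the Gagliardo-Nirenberg interpolation \eqref{intp} to absorb $\|U\|_{L^4}^2$ into the gradient, and the pointwise lower bound \eqref{kbd} applied to $\sum_\nu \G(u_\nu)$---yields the Gronwall inequality \eqref{Grw} with $(Q,G)$ replaced by $(\widetilde Q, \widetilde G)$. Exponential synchronization at a uniform rate $\delta(P)$ of the form \eqref{rate} then follows provided
\[
P > \Psi := \frac{1 + \exp[r(\widetilde G + |V|)]}{m}\Bigl[\gb + \tfrac{\gb^2 + \xi^2}{\ga} + k + \tfrac{a^2}{b} + 2\sqrt{2\widetilde Q}\,C^*\Bigl(\tfrac{k^2}{b} + \tfrac{2\xi^2}{\ga}\Bigr) + \tfrac{64 \widetilde Q^{\,2} C^{*4}}{\eta^3}\Bigl(\tfrac{k^2}{b} + \tfrac{2\xi^2}{\ga}\Bigr)^{\!4}\Bigr],
\]
which is independent of initial data but depends on all the system parameters and on $u_e$.

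The principal technical obstacle is that $\widetilde K$, $\widetilde Q$ and $\widetilde G$ now depend on $P$ through the new forcing $mPu_e^2/4$, so $\Psi = \Psi(P,u_e)$ is an implicit threshold. The polynomial hypothesis on $f$ and $h$ keeps all algebraic estimates sharp; completion of squares caps the growth of $\widetilde C_2$ at $O(P)$, hence $\widetilde G = O(P^{1/2})$. To guarantee a nonempty admissible range of $P$, one exploits the nonnegative square term $-P\sum_j \G(u_j)(u_i - u_e/2)^2$ as a \emph{bootstrap damping}: once a crude a priori pointwise bound on $u_\nu$ is established, one obtains a uniform lower bound on $\G(u_\nu)$, which can then be fed back into the dissipative estimate to break the $P$-dependence of $\widetilde G$ and close the loop via a fixed-point argument. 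This bootstrap/fixed-point step is the single technically new ingredient beyond the analysis carried out for Theorem \ref{ThM}.
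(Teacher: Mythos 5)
Your observation that $u_e$ cancels identically in the difference of the coupling terms is correct, and Step 3 of the synchronization argument would indeed transfer. The genuine gap is in your treatment of the dissipative estimates. By completing the square you bound the coupling contribution by $+\,mPu_e^2/4$, so your forcing constant $\widetilde C_2$, and hence the ultimate bounds $\widetilde K$, $\widetilde Q$, $\widetilde G$, all grow with $P$; in particular $\widetilde G = O(P^{1/2})$ enters the threshold through the factor $1+\exp[r(\widetilde G+|V|)]$, which grows like $\exp[c\sqrt{P}\,]$. The resulting condition $P>\Psi(P,u_e)$ is then not merely ``implicit'': it is not clear that it is satisfiable at all, since the right-hand side eventually dominates every linear function of $P$, so the admissible set of coupling strengths could be empty. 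Your proposed remedy --- a bootstrap in which a uniform lower bound on $\Gamma(u_\nu)$ is fed back to exploit the damping term $-P\sum_j\Gamma(u_j)(u_i-u_e/2)^2$ --- is precisely the circular step that needs proof (the lower bound on $\Gamma(u_\nu)$ requires the pointwise ultimate bound you are in the middle of establishing), and you do not carry it out. As written, the proof does not close.

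The paper avoids this difficulty entirely by the change of variables $\tilde u_i = u_i - u_e$. In the shifted variables the coupling term becomes $-\sum_{j} P\tilde u_i\big/\big(1+\exp[-r(\tilde u_j-\tilde V)]\big)$ with $\tilde V=V-u_e$, so that testing against $\tilde u_i$ produces a nonpositive term that is simply discarded, exactly as in Theorem \ref{T1}; the price is that $u_e$ migrates into the reaction terms, producing new nonlinearities $\tilde f,\tilde h$ as in \eqref{Tfh} --- and this is where the polynomial hypothesis on $f$ and $h$, which your argument never uses, is actually needed, namely to verify that $\tilde f,\tilde h$ satisfy structural assumptions \eqref{Asp3}--\eqref{Asp4} of the same form with new constants. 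The resulting bounds $K^*$, $Q^*$, $G^*$ in \eqref{bK}, \eqref{tQ}, \eqref{tG} are independent of $P$, and the threshold $\Psi$ in \eqref{eSC} is explicit and manifestly attainable. To salvage your route you would have to retain, rather than discard, the damping part of your square-completion, which in effect reproduces the paper's substitution.
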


\begin{proof}
We just briefly sketch the proof. Make the variable changes to denote $\tu_i = u_i - u_e, \, 1 \leq i \leq m$. Then the system \eqref{eNW} becomes 
\begin{equation*}
	\begin{split}
		\frac{\pdr \tu_i}{\pdr t} = \eta \gd \tu_i + f (\tu_i + u_e, z_i) &\, - k \tanh (\rho_i) (\tu_i + u_e) - \sum_{j=1}^m \frac{P\, \tu_i}{1 + \exp\, [- r (\tu_j + u_e - V)]}\, ,  \\
		& \frac{\pdr z_i}{\pdr t}  =  \gL z_i + h(\tu_i + u_e, z_i),    \\[3pt]
		& \frac{\pdr \rho_i}{\pdr t} = a (\tu_i + u_e) - b \rho_i.
	\end{split} 
\end{equation*}
This system of equations can be written as 
\beq \bl{eNW2} 
	\begin{split}
		& \frac{\pdr \tu_i}{\pdr t} = \eta \gd \tu_i + \tf (\tu_i, z_i, \rho_i) - \sum_{j=1}^m \frac{P\, \tu_i}{1 + \exp [- r (\tu_j - \tV)]},  \\
		& \frac{\pdr z_i}{\pdr t}  =  \gL z_i + \Th (\tu_i, z_i),    \\[3pt]
		& \frac{\pdr \rho_i}{\pdr t} = a \tu_i - b \rho_i + a u_e.
	\end{split} 
\eeq
where $\tV = V - u_e$ and the two new functions $\tf$ and $\Th$ are
\beq \bl{Tfh}
	\begin{split}
		\tf (\tu_i, z_i, \rho_i) &= f(\tu_i + u_e, z_i)  - k \tanh (\rho_i) (\tu_i + u_e),  \\
		\Th (\tu_i, z_i) &= h(\tu_i + u_e, z_i). 
	\end{split}
\eeq	
By expansion of the scalar and vector polynomials $f(s + u_e, \gs)$ and $h(s + u_e, \gs)$ and $|\tanh (\rho_i)| \leq 1$, using Young's inequality \eqref{yg} appropriately, it follows from the Assumptions \eqref{Asp1} and \eqref{Asp2} that the new functions $\tf$ and $\Th$ possess the properties
\beq \bl{Asp3}
	\begin{split}
		&\tf (s, \gs, \rho) s \leq  - \tap |s|^4 + \tgl |s| |\gs| + \tJ, \quad  (s, \gs, \rho) \in \mathbb{R}^{2 + \ell}, \\
		&\max \left\{\frac{\pdr \tf}{\pdr s} (s, \gs, \rho), \, \left| \frac{\pdr \tf}{\pdr \gs} (s, \gs, \rho)\right| \right\} \leq \tgb,  \; (s, \gs, \rho) \in \mathbb{R}^{2 + \ell},  \\
		& \left| \frac{\pdr \tf}{\pdr \rho} (s, \gs, \rho)\right| \leq k |s + u_e|, \; (s, \gs, \rho) \in \mathbb{R}^{2 + \ell},
	\end{split}
\eeq
and
\beq \bl{Asp4}
	\begin{split}
		&\Th (s, \gs) \gs \leq \tq |s|^2 |\gs| + \tL |\gs |,  \quad (s, \gs ) \in \mathbb{R}^{1 + \ell},  \\
		& \left| \frac{\pdr \Th}{\pdr s}(s, \gs) \right| \leq \txi (|s| + 1), \;\; \frac{\pdr \Th}{\pdr \gs}(s, \gs) = 0,  \quad (s, \gs ) \in \mathbb{R}^{1 + \ell},
	\end{split}
\eeq 
where the positive constants $\tap, \tgl, \tJ, \tgb$ in \eqref{Asp3} and $\tq, \tL, \txi $ in \eqref{Asp4} are the new parameters for the new model equations \eqref{eNW2}, which may also depend on the constant $u_e$. 
	
We notice the structural similarity and the new parameters between the Assumptions \eqref{Asp1}-\eqref{Asp2} and the properties \eqref{Asp3}-\eqref{Asp4} and we see the maneuverable differences in the term $-k \tanh(\rho_i)u_i$ of \eqref{Tfh} in \eqref{Asp3} and a spillover constant $au_e$ of the third equation of \eqref{eNW2}. 
	
Then we can conduct \emph{a priori} estimates parallel to the steps shown in Section 3 and Section 4 in the same formulated framework. It can be shown that the weak solutions of this neural network model \eqref{eNW2} exist globally in time and the solution semiflow has an absorbing set $\Tilde{B}^*$ in the same state space $E$. Specifically, we have
$$
	\Tilde{B}^* = \{\tg \in E: \|\tg \|^2 \leq K^*\}
$$
where $ g = \text{col} \, (u_1, z_1, \rho_1, \cdots, u_m, z_m, \rho_m)$ and
\beq \bl{bK}
	K^* = 1 + \frac{2C_4 m}{\Tilde{\mu} \min \{C_3, 1\}} |\gw |,
\eeq
in which
\beq \bl{prm}
	\begin{split}
		&\Tilde{\mu} = \min \left\{\frac{2}{C_3}, \; \ga, \; b\right\},  \quad C_3 = \frac{1}{\tap} \left(1 + \frac{\tq^2}{4 \ga}\right), \\
		C_4 &= C_3 \tJ + \frac{\tL^2}{\ga} + \frac{a^2 u_e}{b} + \frac{1}{4} \left(\frac{C_3^2\, \tgl^2}{\ga} + \frac{a^2}{2b} + 1\right)^2.
	\end{split}
\eeq
Moreover, the ultimate bound property holds:
$$
	\limsup_{t \to \infty} \, \sum_{i=1}^m \|\tu_i (t)\|^4_{L^4} < Q^*
$$
where
\beq \bl{tQ}
	Q^* = 1 + \frac{4 \tgl^2}{\tap} K^* + m \left(\tap + \frac{4\tJ^2}{\tap} \right) |\gw |.
\eeq
We can also get the pointwise ultimate bound:
$$
	\limsup_{t \to \infty} \, \sum_{i=1}^m |\tu_i (t, x)|_{\mathbb{R}} < G^*
$$
where
\beq \bl{tG}
	G^* = \left[1 + \frac{2C_4 m}{\Tilde{\mu} \min \{C_3, 1\}} \right]^{1/2}.
\eeq
	
Finally we define the neuron difference functions: For $1 \leq i, j \leq m\, (i \neq j)$, 
$$
	\Tilde{U}_{ij} = \tu_i - \tu_j, \qquad Z_{ij} = z_i - z_j, \qquad R_{ij} = \rho_i - \rho_j.
$$
They satisfy the following differencing reaction-diffusion equations:
\beq \bl{dNW} 
	\begin{split}
		\frac{\pdr \Tilde{U}_{ij}}{\pdr t} = &\, \eta \gd \Tilde{U}_{ij} + \tf (\tu_i, z_i, \rho_i) - \tf (\tu_j, z_j, \rho_j) - \sum_{\nu = 1}^m \frac{P (\tu_i - \tu_j)}{1 + \exp\, [-r (\tu_\nu - \tV)]} \\
		\frac{\pdr Z_{ij}}{\pdr t} = &\, \gL  Z_{ij} + \Th (\tu_i, z_i) - \Th (\tu_j, z_j), \\
		\frac{\pdr R_{ij}}{\pdr t} = &\, a\Tilde{U}_{ij} - b R_{ij}.
	\end{split}
\eeq
Parallel to the steps in the proof of Theorem \ref{ThM}, one can show that if the threshold condition \eqref{PG} is satisfied, where the threshold constant
\beq \bl{eSC}
	\begin{split}
		&\Psi =  \frac{1 + \exp\, [r (G^* + |\tV|)]}{m}\, \times  \\
		& \left[\tgb + \frac{\tgb^2 + \txi^2}{\ga} + k + \frac{a^2}{b} + 2\sqrt{2 Q^*}C^*\left(\frac{k^2}{b}+ \frac{2\txi^2}{\ga}\right) + \frac{64(Q^*)^2 (C^*)^4}{\eta^3}\left[\frac{k^2}{b} + \frac{2\txi^2}{\ga}\right]^4\right]
	\end{split}
\eeq
and the mathematical coefficient $C^*$ remains the same as in Theorem \ref{ThM}, then the solutions of this memristice neural network $\mathbb{NW}$ is exponentially synchronized in the state space $E$ at a uniform convergence rate.
\end{proof}

\section{\textbf{Examples and Numerical Simulation}}

In this section we shall provide two typical and most useful mathematical models of biological neural networks with memristors to illustrate the applications of the achieved exponential synchronization result in Theorem \ref{ThM}.

To avoid notational overlap or confusion, the parameters in the following two subsections will be attached with subscript 1 and subscript 2, respectively. 

Numerical simulation for these two types of memristive neural networks will be performed to show the synchronization convergence behavior with a relatively higher threshold and possibly lower convergence rate due to the nonlinear weak coupling of the solution trajectories in the depicted curves of their $L^2$-norms.

\subsection{\textbf{Diffusive Hindmarsh-Rose Equations with Memristor}}

Consider a model of memristive diffusive Hindmarsh-Rose neural networks with memristor \cite{HR, CPY, Y1}:
\begin{equation} \bl{HR}
	\begin{split}
		\frac{\pdr u_i}{\pdr t} & = \eta_1 \gd u_i + a_1 u_i^2 - b_1 u_i^3 + v_i - w_i - k_1\tanh (\rho_i) u_i - P u_i \sum_{j=1}^m \G (u_j), \,  \\
		\frac{\pdr v_i}{\pdr t} & = \ap_1 - \gb_1 u_i^2 - v_i,  \\
		\frac{\pdr w_i}{\pdr t} & = q_1 u_i  - r_1 w_i,  \\
		\frac{\pdr \rho_i}{\pdr t} & = c_1 u_i - \delta_1 \rho_i, 
	\end{split}
\end{equation}
for $t > 0,\; x \in \gw \subset \mathbb{R}^{n}$ ($n \leq 3$), where $1 \leq i \leq m$ and $\gw$ is a bounded domain up to three dimension with locally Lipschitz continuous boundary. The nonlinear function $\Gamma (s)$ is the same as in \eqref{Ga}.

In this system \eqref{HR}, the variable $u_i(t, x)$ refers to the membrane electric potential of a neuron cell, the variable $v_i(t, x)$ represents the transport rate of the ions of sodium and potassium through the fast channels and can be called the spiking variable, while the variables $w_i(t, x)$  called the bursting variable represents the transport rate across the neuron membrane through slow channels of calcium and some other ions. All the involved parameters $a_1, b_1, c_1, \eta_1, \ap_1, \gb_1, q_1, r_1,  \de_1, k_1$ and the coupling strength coefficient $P$ can be any positive constants. 

We impose the homogeneous Neumann boundary conditions for the $u$-component, $\frac{\pdr u}{\pdr \nu} \, (t, x) = 0,\, x \in \partial \gw$, and the initial conditions of the components are denoted by
$$
u_i^0 (x) = u_i(0, x), \;v_i^0 (x) = v_i(0, x), \; w_i^0 (x) = w_i (0, x), \; \rho_i^0 = \rho_i (0, x), \;\; 1 \leq i \leq m.
$$

For illustrating the synchronization result Theorem \ref{ThM}, we can simply check all the Assumptions in \eqref{Asp1} and \eqref{Asp2} are satisfied by this model of memristive Hindmarsh-Rose equations \eqref{HR}. The vector functions $z_i (t, x)$ in the general model \eqref{NW} in this case is
$$
z_i (t, x) = 
\begin{pmatrix}
	v_i (t, x) \\[3pt]
	w_i (t, x)  \\[3pt]
\end{pmatrix}
$$
and correspondingly the vector $\gs = \text{col}\, (\gs_v, \gs_w)$ has two components.

Verify the Assumptions \eqref{Asp1} and \eqref{Asp2}: In this model, we have the scalar function
\begin{equation*}
	f(s, \gs) = a_1 s^2 - b_1 s^3 +  \gs_v - \gs_w,
\end{equation*}
the 2-dimensional square matrix and the vector function 
\begin{equation*}
	\gL =
	\begin{pmatrix}
		- I \quad & 0  \\[3pt]
		0 \quad & - r_1 I. \\[3pt]
	\end{pmatrix},
	\quad
	h(s, \gs) = 
	\begin{pmatrix}
		\ap_1 - \gb_1 s^2  \\[3pt]
		q_1 s  \\[3pt]
	\end{pmatrix}.
\end{equation*}
We can verify that
\beq \bl{Ck1}
\begin{split}
	f(s, \gs)s &= s(a_1 s^2 - b_1 s^3 +  \gs_v - \gs_w) = a_1 s^3 -b_1 s^4 + s(\gs_v - \gs_w)  \\
	&\leq \left(\frac{3b_1}{4} |s|^4 + \frac{a_1^4}{4b_1^3}\right) - b_1 |s|^4 + |s|(|\gs_v| + |\gs_w|) \\
	&\leq -\frac{b_1}{4} |s|^4 + \sqrt{2} |s| |\gs| + \frac{a_1^4}{4b_1^3},  \quad \text{for} \;\, (s, \gs) \in \mathbb{R}^3,	
\end{split}
\eeq
and
\beq \bl{Ck2}
\begin{split}
	&\max \left\{\frac{\partial f}{\partial s}(s, \gs), \left| \frac{\partial f}{\partial \gs}(s, \gs) \right| \right\} = \max\, \{2a_1 s - 3b_1 s^2, 1\}   \\[2pt]
	\leq &\,\max\, \left\{\frac{a_1^2}{2b_1} + (2b_! - 3b_1)s^2, 1 \right\} \leq \max\, \left\{\frac{a_1^2}{2b_1}, \,1 \right\}, \quad \text{for} \;\, (s, \gs) \in \mathbb{R}^3.
\end{split}
\eeq
Therefore Assumption \eqref{Asp1} is satisfied. Moreover,
\beq \bl{Ck3}
\begin{split}
	\langle \gL\gs, \gs \rangle &= - \gs_v^2 - r_1 \gs_w^2 \leq - \min \left\{1, \, r_1\right\} |\gs|^2, \quad \text{for} \;\, \gs \in \mathbb{R}^2,  \\[4pt]
	h(s, \gs) \gs &= (\ap_1 - \gb_1 s^2)\gs_v + q_1 s \gs_w \leq \gb_1 s^2 |\gs_v| + q_1 s^2 |\gs_w| + (\ap_1 |\gs_v| + q_1 |\gs_w|)  \\[4pt]
	&\leq (\gb_1 + q_1)s^2 |\gs|  + (\ap_1 + q_1) |\gs|, \quad \text{for} \;\, (s, \gs) \in \mathbb{R}^3,
\end{split}
\eeq
and
\beq \bl{Ck4}
\begin{split}
	&\frac{\partial h}{\partial s} (s, \gs) \leq |\text{col}\, (-2\gb_1 s,\, q_1)| \leq \max\, \{2 \gb_1, q_1\} (|s | + 1),   \\[4pt]
	&\frac{\partial h}{\partial \gs} (s, \gs) = 0, \quad \text{for} \;\, (s, \gs) \in \mathbb{R}^3.
\end{split}
\eeq
Therefore Assumption \eqref{Asp2} is also satisfied. We can record the specific parameters in \eqref{Asp1} and \eqref{Asp2} for this memristive Hindmarsh-Rose neural network model as follows:
\beq \bl{spm}
\begin{split}
	&\ap =  \frac{b_1^4}{4},  \quad \gl = \sqrt{2},  \quad J = \frac{a_1^4}{4b^3}, \quad  \gb = \max\, \left\{\frac{a_1^2}{2b_1}, \,\sqrt{2}\right\},  \\
	\ga = \max\, &\left\{1, \, r_1\right\}, \quad   q =  \gb_1 + q_1, \quad L =  \ap_1 + q_1, \quad \xi = \max\, \{2 \gb_1, q_1\}.
\end{split}
\eeq
Apply the proved synchronization Theorem \ref{ThM} to this memristive diffusive Hindmarsh-Rose neural network model \eqref{HR}. Then we reach the following result.

\begin{theorem} \bl{ThHR}
	For memristive diffusive Hindmarsh-Rose neural networks with the model \eqref{HR}, if the threshold condition \eqref{SC} with the parameters in \eqref{spm} is satisfied by the coupling strength coefficient $P$, then the neural network is exponentially synchronized in the state space $E = [L^2 (\gw, \mathbb{R}^4)]^m$ at a uniform exponential convergence rate $\delta (P)$ shown in \eqref{rate} with the parameters given in \eqref{spm}.
\end{theorem}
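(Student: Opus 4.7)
The plan is to recognize that Theorem \ref{ThHR} is essentially an application of the general synchronization Theorem \ref{ThM} to the concrete Hindmarsh--Rose model \eqref{HR}. So the entire task reduces to verifying that \eqref{HR} fits the abstract framework \eqref{NW} under Assumptions \eqref{Asp1}--\eqref{Asp2}, after which the conclusion is automatic.

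First I would identify the ingredients. The vector variable in the general model \eqref{NW} is taken to be $z_i = \mathrm{col}(v_i, w_i)\in\mathbb{R}^2$, so that $\ell = 2$. Reading off \eqref{HR} and matching term-by-term, the nonlinear scalar reaction is $f(s,\sigma) = a_1 s^2 - b_1 s^3 + \sigma_v - \sigma_w$, the linear operator on the $z$-component is the diagonal matrix $\gL = \mathrm{diag}(-I,-r_1 I)$, and the vector reaction is $h(s,\sigma) = \mathrm{col}(\alpha_1 - \beta_1 s^2,\, q_1 s)$. The memristive equation and the diffusion/coupling terms in \eqref{HR} have exactly the form prescribed in \eqref{NW}, so the structural identification is immediate.

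Next I would verify Assumptions \eqref{Asp1}--\eqref{Asp2} by direct computation, as carried out in \eqref{Ck1}--\eqref{Ck4}. The only nontrivial step is the dissipativity bound $f(s,\sigma)s \leq -\alpha |s|^4 + \lambda |s||\sigma| + J$: the quadratic-in-$s$ term produces a cubic $a_1 s^3$ after multiplication by $s$, which is absorbed into the quartic $-b_1 s^4$ via Young's inequality $a_1 s^3 \leq \frac{3b_1}{4}|s|^4 + \frac{a_1^4}{4b_1^3}$, leaving a strictly negative quartic together with a bounded constant. The remaining bounds on $\partial f/\partial s$, $\partial f/\partial\sigma$, $\langle \gL \sigma,\sigma\rangle$, $h(s,\sigma)\sigma$, $\partial h/\partial s$, and $\partial h/\partial\sigma$ are straightforward polynomial estimates using $|\sigma_v|, |\sigma_w| \leq |\sigma|$ and Young's inequality; this is the only place where any real inequality work appears, and it is mild.

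Finally, with these verifications I would record the explicit correspondence of abstract parameters to concrete ones as in \eqref{spm}, namely $\alpha = b_1/4$ (not $b_1^4/4$ as typeset), $\lambda = \sqrt{2}$, $J = a_1^4/(4b_1^3)$, $\beta = \max\{a_1^2/(2b_1),\sqrt{2}\}$, $\gamma = \min\{1,r_1\}$, $q = \beta_1+q_1$, $L = \alpha_1+q_1$, $\xi = \max\{2\beta_1,q_1\}$. Then I would invoke Theorem \ref{ThM}: its hypotheses are now fulfilled, the state space specializes to $E = [L^2(\gw,\mathbb{R}^4)]^m$ since $2+\ell = 4$, the threshold \eqref{SC} computed with the parameters \eqref{spm} yields the desired coupling threshold for $P$, and the uniform exponential decay of the pairwise differences $g_i(t)-g_j(t)$ in $E$ follows at the rate $\delta(P)$ given in \eqref{rate}. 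The only genuine obstacle is purely bookkeeping: making sure that the Young's-inequality constants used to verify \eqref{Asp1}--\eqref{Asp2} produce the correct $\alpha,\lambda,J,\beta,\gamma,q,L,\xi$ that get plugged into \eqref{SC} and \eqref{kap}; there is no new dynamical or analytical content beyond Theorem \ref{ThM}.
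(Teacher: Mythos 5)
Your proposal follows exactly the paper's route: identify $z_i=\mathrm{col}(v_i,w_i)$, $f$, $\Lambda$, $h$, verify Assumptions \eqref{Asp1}--\eqref{Asp2} by the computations of \eqref{Ck1}--\eqref{Ck4}, record the parameter dictionary \eqref{spm}, and invoke Theorem \ref{ThM}. Your corrections to the parameter list (reading $\alpha=b_1/4$ rather than $b_1^4/4$, and $\gamma=\min\{1,r_1\}$ rather than $\max\{1,r_1\}$, consistent with \eqref{Ck1} and \eqref{Ck3}) are the values the argument actually produces, so the proposal is correct and essentially identical to the paper's proof.
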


We numerically solve the differential equations of the memristive Hindmarsh-Rose neural network with the model \eqref{HR} in a two-dimensional square domain. And we use the finite difference method for the numerical scheme programmed in Python.

Choose the following parameters in the model \eqref{HR}:
\begin{gather*}
	m = 4,\;\ \eta_2=5,\; \ a_1=1,\;\ b_1=2, \, k_1=0.3, \;\ V=0.5, \;\ r=0.1, \\
	\alpha_1=0.4, \;\ \beta_1=0.06,\;\  q_1=0.2,\;\  r_1=4, \;\  c_1=1, \;\ \delta_1=7.
\end{gather*}
Take time-step to be 0.00025 and spatial-step to be 1 on a $32*32$ membrane. We compute and show the $L^2$-norm curves of the neuron membrane potential variable $u_i$, the spiking variable $v_i$, the bursting variable $w_i$ and the memductance variable $\rho_i$ in Figure \ref{figHR1} to Figure \ref{figHR4}. Lastly the pairwise difference of vector solutions $g_i, \ i=1,2,3,4$ in the energy space $E$ is shown in Figure \ref{figHR5}. 

In Figure \ref{figHR1} to Figure \ref{figHR4}, with a comparison between results after $666$ iterations and results after $2000$ iterations, one can observe the synchronization tendency of the four characterizing variables $(u_i, v_i, w_i, \rho_i)$ among the neurons in the simulated mimristive Hindmarsh-Rose neural network. From Figure \ref{figHR5}, we observe that the $L^2$-norms of pairwise differences $\|g_i - g_j\|$ tend to $0$. 

We also calculate the following key constants involved in Theorem \ref{ThM} based on our selection of parameters, rounding up to 2 digits.
\begin{gather*}
	C_1 = 0.25, \quad C_2 = 0.44, \quad \mu = 4.0,  \quad K = 3630.45, \quad  Q = 23719.02, \quad \\
	G = 2.12, \quad C^* = 0.4,  \quad  \kappa = 16.69, \quad  P = 19.60, \quad \delta = 4.0.
\end{gather*}
The constant $C^*$ from Gargliardo-Nirenberg inequality is chosen to be $0.4$ based on \cite{BV}.

Table \ref{tab:tableHR1} to Table \ref{tab:tableHR4} list the sampled values of the four components $u_i, v_i, w_i$, and $\rho_i$ of the simulated solution $g_i$ at one same point in the domain at the initial time $t=0$, at the 200 and 2000 time-step. It is seen that with a big difference on the initial values, after a certain time, the values of $u_i$, $v_i$,  $w_i$, and $\rho_i$ tend to be close to each other between various neurons. 
\begin{figure}%
	\centering
	\subfloat{{\includegraphics[width=12cm]{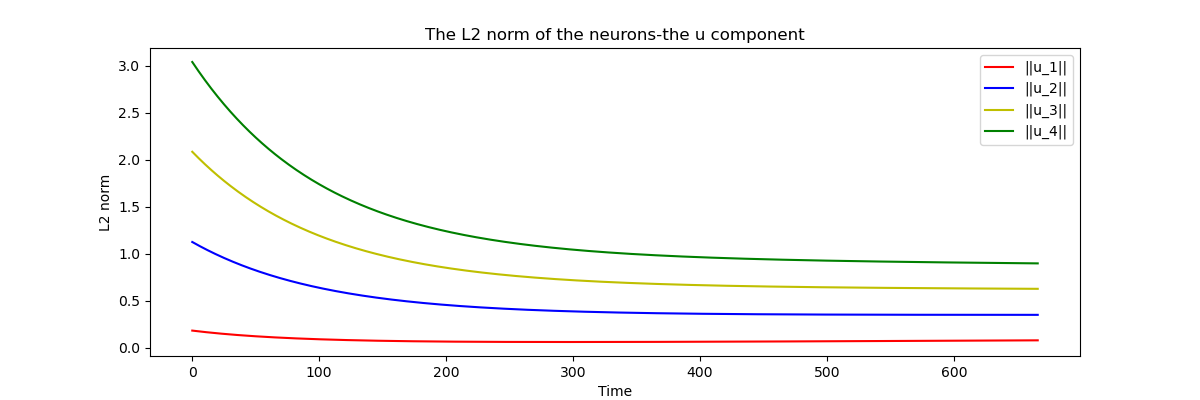}} }%
	\qquad
	\subfloat{{\includegraphics[width=12cm]{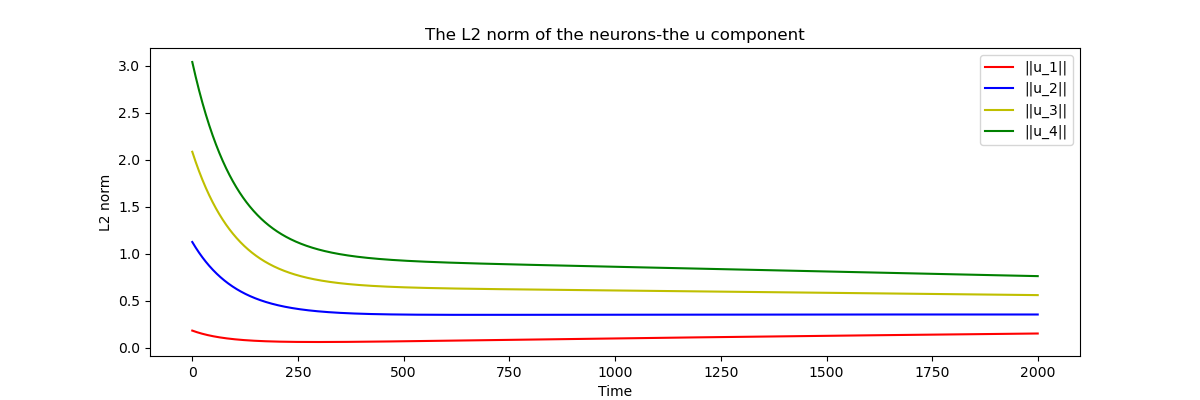} }}%
	\caption{The $L^2$ norm of the neurons component $u_i$ after 666 iterations (upper figure) and after 2000 iterations (lower figure)}%
	\label{figHR1}%
\end{figure}

\begin{figure}%
	\centering
	\subfloat{{\includegraphics[width=12cm]{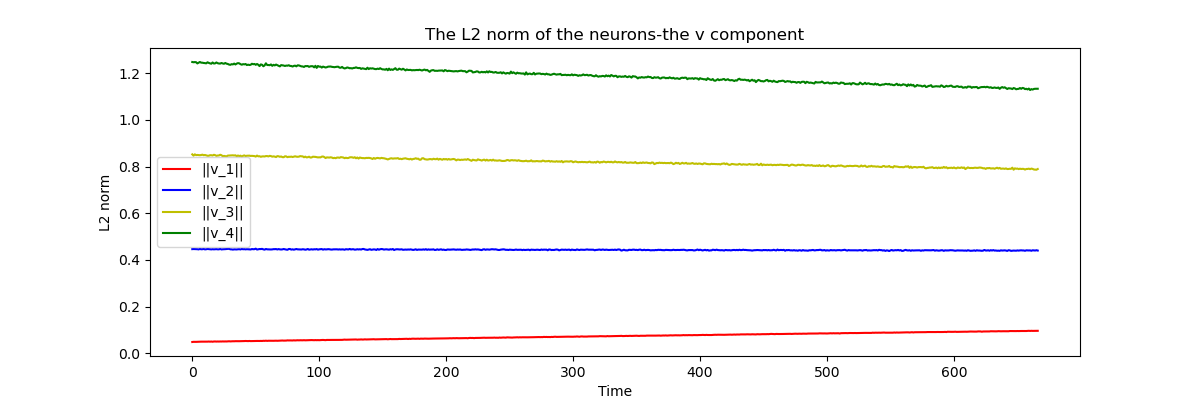} }}%
	\qquad
	\subfloat{{\includegraphics[width=12cm]{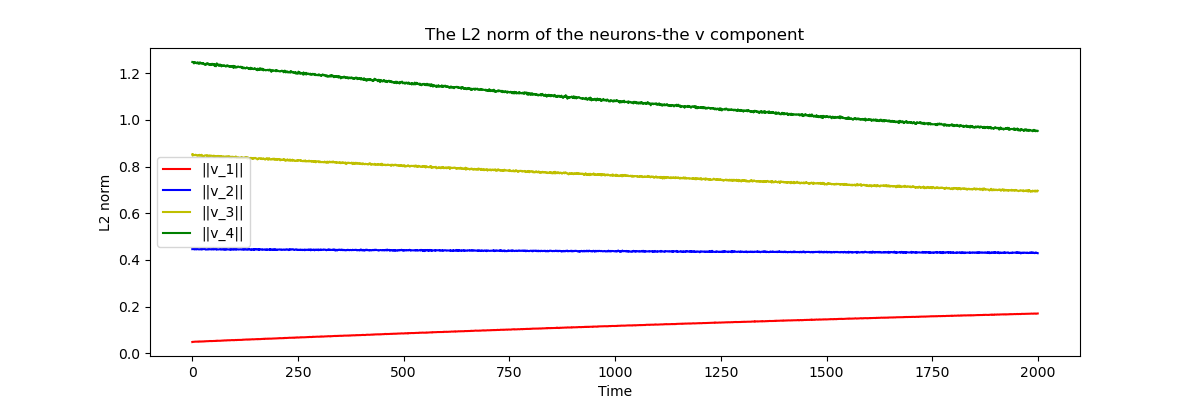} }}%
	\caption{The $L^2$ norm of the neurons component  $v_i$ after 666 iterations (upper figure) and after 2000 iterations (lower figure)}%
	\label{figHR2}%
\end{figure}

\begin{figure}%
	\centering
	\subfloat{{\includegraphics[width=12cm]{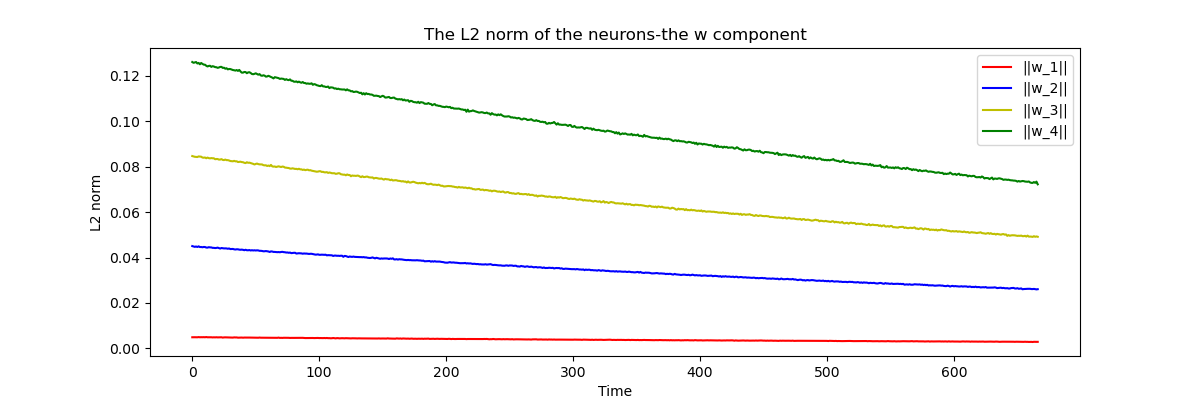} }}%
	\qquad
	\subfloat{{\includegraphics[width=12cm]{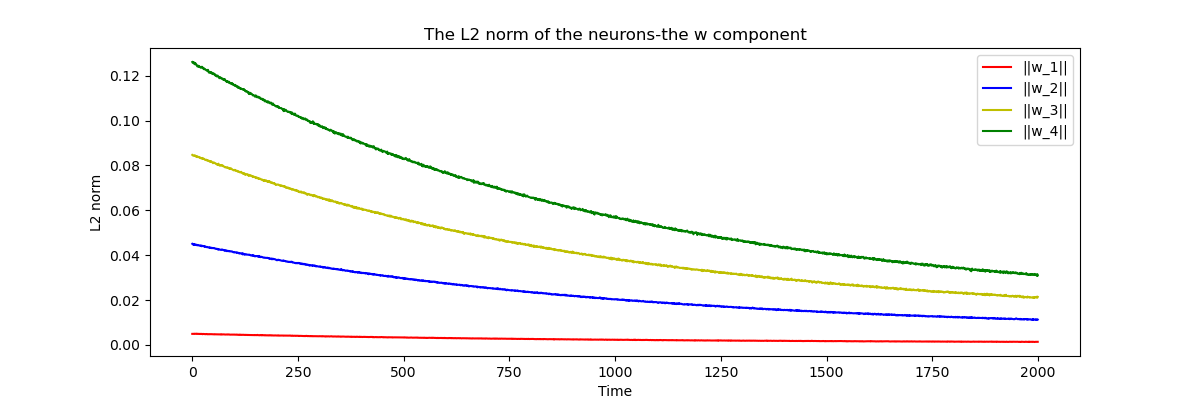} }}%
	\caption{The $L^2$ norm of the neurons component  $w_i$ after 666 iterations (upper figure) and after 2000 iterations (lower figure)}%
	\label{figHR3}%
\end{figure}

\begin{figure}%
	\centering
	\subfloat{{\includegraphics[width=12cm]{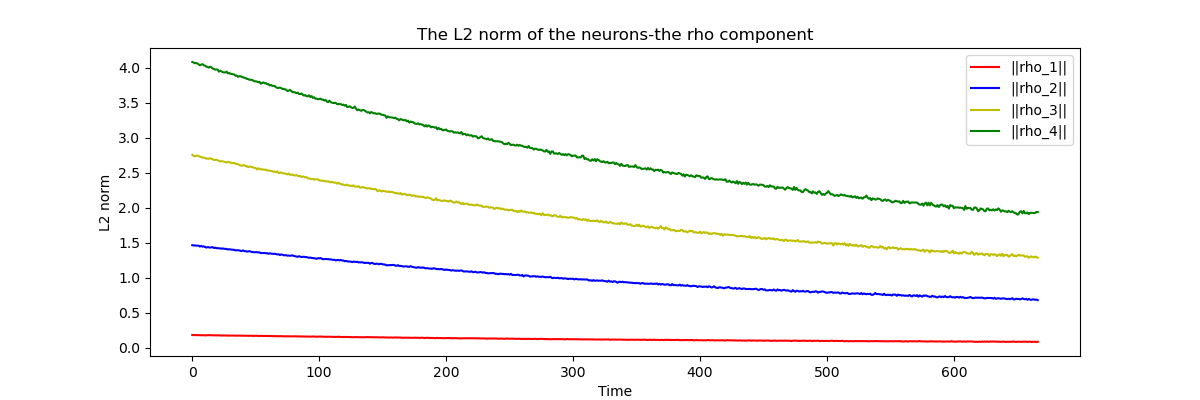} }}%
	\qquad
	\subfloat{{\includegraphics[width=12cm]{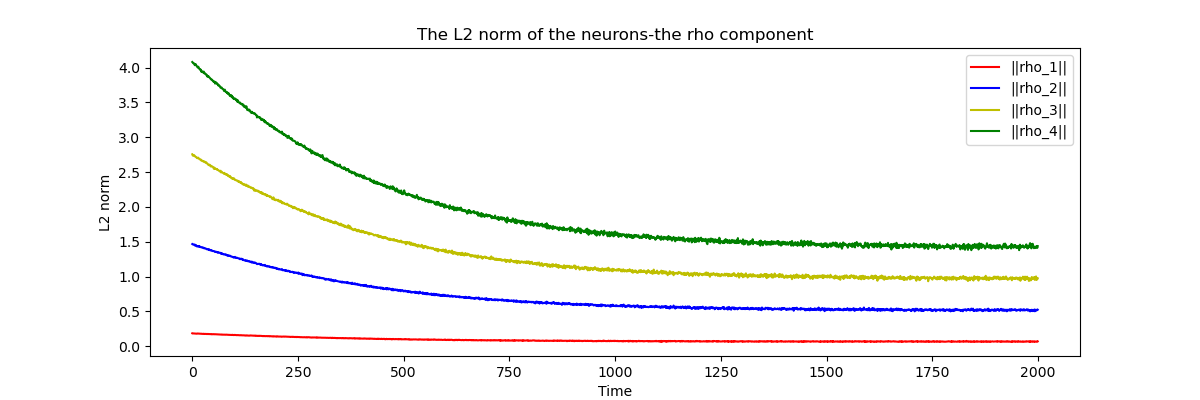} }}%
	\caption{The $L^2$ norm of the neurons component  $\rho_i$ after 666 iterations (upper figure) and after 2000 iterations (lower figure)}%
	\label{figHR4}%
\end{figure}

\begin{figure}%
	\centering
	\subfloat{{\includegraphics[width=12cm]{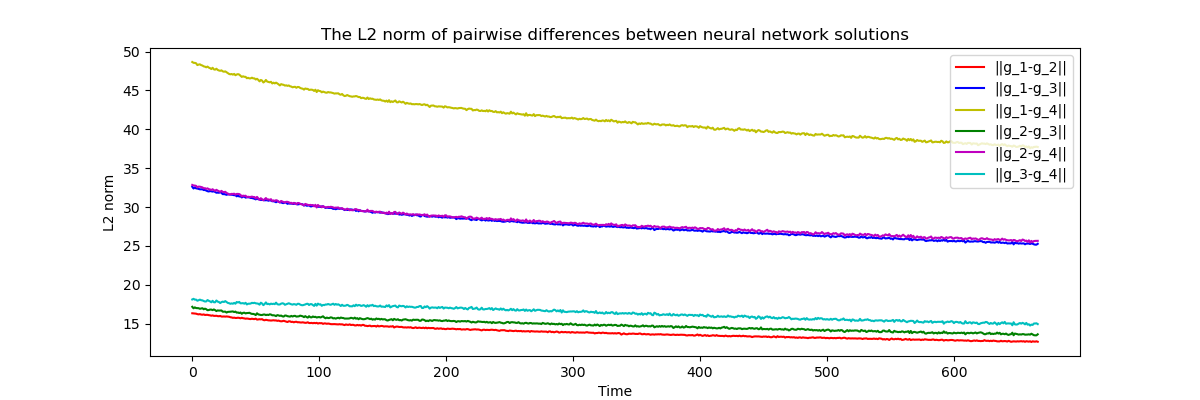} }}%
	\qquad
	\subfloat{{\includegraphics[width=12cm]{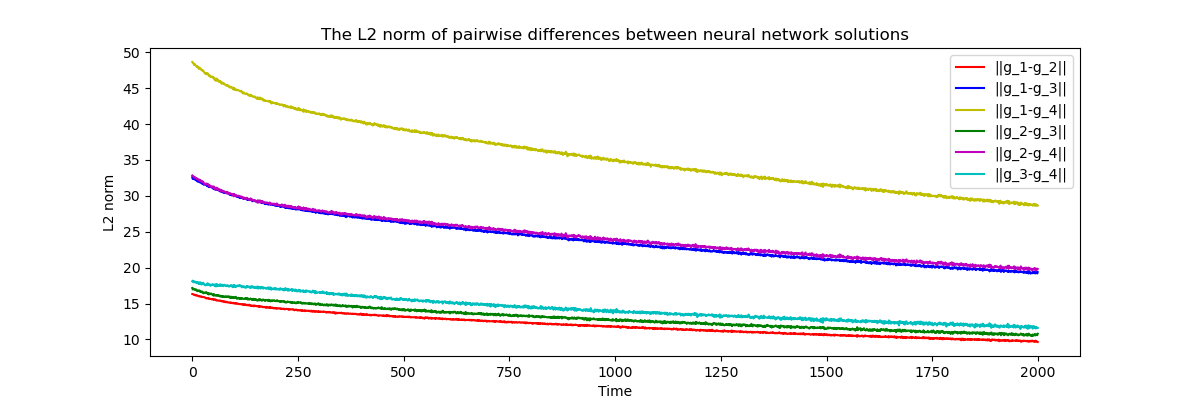} }}%
	\caption{The $L^2$ norm of pairwise differences between neural network solutions after 666 iterations (upper figure) and after 2000 iterations (lower figure)}%
	\label{figHR5}%
\end{figure}

\begin{table}[ht!]
	\begin{center}
		\caption{Comparison of the $u_i$ at the point $x=10,\ y=10$ }
		\label{tab:tableHR1}
		\begin{tabular}{l|c|c|r} % <-- Alignments: 1st column left, 2nd middle and 3rd right, with vertical lines in between
			& Initial Value & At the 200 time step & At the 2000 time step\\
			\hline
			$u_1$ & 0.0.0075524   & 0.00160041   & 0.00441385   \\
			$u_2$ & 0.038242690   & 0.01277219   & 0.00990575    \\
			$u_3$ & 0.064522400   & 0.02294659   & 0.01506631    \\
			$u_4$ & 0.098338950   & 0.03892756   & 0.02388300      \\
		\end{tabular}
	\end{center}
\end{table}

\begin{table}[ht!]
	\begin{center}
		\caption{Comparison of the $v_i$ at the point $x=10,\ y=10$ }
		\label{tab:tableHR2}
		\begin{tabular}{l|c|c|r} % <-- Alignments: 1st column left, 2nd middle and 3rd right, with vertical lines in between
			& Initial Value & At the 200 time step & At the 2000 time step\\
			\hline
			$v_1$ & 0.01623901   & 0.03495750   & 0.16725151  \\
			$v_2$ & 0.35418338   & 0.35641666   & 0.37220956\\
			$v_3$ & 0.61091695   & 0.60062472   & 0.52791116\\
			$v_4$ & 1.30404545   & 1.25993808   & 0.94827405\\
		\end{tabular}
	\end{center}
\end{table}

\begin{table}[ht!]
	\begin{center}
		\caption{Comparison of the $w_i$ at the point $x=10,\ y=10$ }
		\label{tab:tableHR3}
		\begin{tabular}{l|c|c|r} % <-- Alignments: 1st column left, 2nd middle and 3rd right, with vertical lines in between
			& Initial Value & At the 200 time step & At the 2000 time step\\
			\hline
			$w_1$ & 0.00289204    & 0.00239707      & 0.00053399    \\
			$w_2$ & 0.05901589    & 0.04850523      & 0.00841518\\
			$w_3$ & 0.08612489    &  0.07083853     & 0.01235578\\
			$w_4$ & 0.14579938    & 0.11989174      & 0.02088769\\
		\end{tabular}
	\end{center}
\end{table}

\begin{table}[ht!]
	\begin{center}
		\caption{Comparison of the $\rho_i$ at the point $x=10,\ y=10$ }
		\label{tab:tableHR4}
		\begin{tabular}{l|c|c|r} % <-- Alignments: 1st column left, 2nd middle and 3rd right, with vertical lines in between
			& Initial Value & At the 200 time step & At the 2000 time step\\
			\hline
			$\rho_1$ & 0.00516599     & 0.00377392   & 0.00065141   \\
			$\rho_2$ & 0.05720875     & 0.04118323   & 0.00309032 \\
			$\rho_3$ & 0.09721523     & 0.07001327   & 0.00511662\\
			$\rho_4$ & 0.13192033     & 0.09538789   & 0.00755887 \\
		\end{tabular}
	\end{center}
\end{table}
The synchronization result rigorously proved in this work is illustrated by the example with sample selections of the system parameters and a randomized set of initial data. Our numerical simulation also exhibits that the neuron potentials $u_i$ seem to be synchronized fastest within a limited time partly due to the memristor-potential coupling, while it takes much longer time to observe the synchronization on the other three variables $v_i$, $w_i$ and $\rho_i$.

\subsection{\textbf{Diffusive FitzHugh-Nagumo Equations with Memristor}}

Consider a model of memristive neural networks described by the diffusive FitzHugh-Nagumo equations cf. \cite{FH, LSY, Y3} with nonlinear weak coupling:
\beq \bl{FHN}
\begin{split}
	\frac{\pdr u_i}{\pdr t} & = \eta_2 \gd u_i + \ap_2 u_i (u_i - \gb_2)(1 - u_i) - \ga_2\, w_i - k_2 \tanh (\rho_i) u_i - P u_i \sum_{j=1}^m \G (u_j),  \\
	\frac{\pdr w_i}{\pdr t} & =  a_2 u_i + c_2 - b_2 w_i,    \\[2pt]
	\frac{\pdr \rho_i}{\pdr t} & = q_2 u_i - r_2 \rho_i,  
\end{split} 
\eeq
for $t > 0,\, x \in \gw \subset \mathbb{R}^{n}$ ($n \leq 3$), where $1 \leq i \leq m$ and $\gw$ is a bounded domain with locally Lipschitz continuous boundary $\partial \gw$. All the involved parameters are positive constants. The nonlinear function $\Gamma (s)$ is the same as in \eqref{Ga}.

In this system, the fast excitatory variable $u_i(t,x)$ refers to the transmembrane electrical potential of a neuron cell and the slow recovering variable $w_i(t, x)$ represents the integrated ionic current across the neuron membrane. The memductance $\rho_i (t, x)$ of the memristor caused by the electromagnetic induction flux across the neuron membrane is a scalar function. We impose the homogeneous Neumann boundary condition is $\frac{\pdr u_i}{\pdr \nu} (t, x) = 0,\, t > 0, \, x \in \partial \gw, 1 \leq i \leq m$, and the initial states of the system are denoted by 
$$
	u_i^0 (x) = u_i(0, x), \quad w_i^0 (x) = w_i (0, x), \quad \rho_i^0 = \rho_i (0, x), \quad 1 \leq i \leq m.
$$

As an application of the synchronization result Theorem \ref{ThM}, here we just check all the Assumptions in \eqref{Asp1} and \eqref{Asp2} are satisfied by this model of memristive FitzHugh-Nagumo neural networks \eqref{FHN}. 

In this case the generic functions in the Assumptions \eqref{Asp1} and \eqref{Asp2} are 
\beq \bl{fh}
\begin{split}
	f(s, \gs) &= \ap_2 s(s - \gb_2)(1 - s) - \ga_2\, \gs,  \\[3pt]
	\gL &= - b_2, \\[3pt]
	h(s, \gs) &= a_2s + c_2, \quad (s, \gs) \in \mathbb{R}^2.
\end{split}
\eeq
Check the Assumptions \eqref{Asp1} and \eqref{Asp2}: We can verify that 
\beq \bl{Vf1}
\begin{split}
	f(s,\gs)s &= -\ap_2 s^4 + \ap_2(1 + \gb_2)s^3 - \ap_2 \gb_2 s^2 - \ga_2\, s\,\gs  \\
	&\leq - \ap_2 \left(s^4 - \frac{3}{4} s^4 - \frac{1}{4}(1 + \gb_2)^4\right) + \ga_2 |s| |\gs|   \\
	&= - \frac{1}{4}\ap_2 s^4 + \ga_2 |s| |\gs| + \frac{1}{4}\ap_2 (1 +\gb_2)^4,   \quad (s, \gs) \in \mathbb{R}^2,
\end{split}
\eeq
and

\beq \bl{Vf2}
\begin{split}
	&\max \left\{ \frac{\partial f}{\partial s}(s, \gs), \left|\frac{\partial f}{\partial \gs}(s, \gs)\right| \right\} = \max\, \left\{- 3\ap_2 s^2 + 2\ap_2 (1 + \gb_2)s - \ap_2\gb_2,\, \ga_2\right\}  \\[3pt]
	\leq &\,\max\, \left\{- 3\ap_2 s^2 + \ap_2 s^2 + (1 + \gb_2)^2 - \ap_2\gb_2,\, \ga_2\right\} < \max\, \left\{ (1 + \gb_2)^2, \, \ga_2\right\}. 
\end{split}
\eeq	
Therefore the Assumption \eqref{Asp1} is satisfied. Moreover,
\beq \bl{Vf3}
\begin{split}
	\langle \gL \gs,\, \gs \rangle &= - b_2 |\gs |^2,   \\
	h(s, \gs)\gs &= (a_2 s + c_2)\gs \leq \frac{1}{4} a_2 |s|^2 |\gs| + (a_2 + c_2)|\gs |,  \\
	\frac{\partial h}{\partial s}(s, \gs) &= a_2 , \quad \frac{\partial h}{\partial \gs}(s, \gs) = 0,
\end{split}
\eeq
for $(s, \gs) \in \mathbb{R}^2$. Therefore Assumption \eqref{Asp2} is also satisfied. We record the specific parameters in \eqref{Asp1} and \eqref{Asp2} for this memristive FitzHugh-Nagumo neural network model as follows:
\beq \bl{npm}
\begin{split}
	\ap =  \frac{1}{4}\ap_2,  \quad  &\gl = \ga_2, \quad J =  \frac{1}{4}\ap_2 (1 +\gb_2)^4, \quad  \gb =  \max\, \left\{ (1 + \gb_2)^2, \, \ga_2\right\},  \\
	&\ga = b_2, \quad   q = \frac{1}{4}a_2, \quad L = a_2 + c_2, \quad \xi = a_2.
\end{split}
\eeq

Apply the proved synchronization Theorem \ref{ThM} to this memristive diffusive FitzHugh-Nagumo neural network model. We also reach the following result.
\begin{theorem} \bl{TFHN}
	For memristive diffusive FitzHugh-Nagumo neural networks with the model \eqref{FHN}, if the threshold condition \eqref{SC} with the parameters in \eqref{npm} is satisfied by the coupling strength coefficient $P$, then the neural network is exponentially synchronized in the state space $E = [L^2 (\gw, \mathbb{R}^3)]^m$ at a uniform exponential convergence rate $\delta (P)$ shown in \eqref{rate} with the parameters given in \eqref{npm}.
\end{theorem}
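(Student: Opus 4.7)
The plan is to recognize Theorem \ref{TFHN} as a direct corollary of Theorem \ref{ThM}: the work is to cast the memristive FitzHugh-Nagumo system \eqref{FHN} in the abstract framework \eqref{NW}, verify that the structural Assumptions \eqref{Asp1}--\eqref{Asp2} hold with the concrete parameters recorded in \eqref{npm}, and then invoke Theorem \ref{ThM} verbatim. Most of the verification has in fact been done just above the statement in equations \eqref{fh}--\eqref{Vf3}, so the proof is largely a bookkeeping reduction.

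First I would identify the vector recovering variable $z_i$ in \eqref{NW} with the scalar $w_i$ (so $\ell = 1$), set $\gL = -b_2$, and read off $f$ and $h$ as in \eqref{fh}. With $\ell = 1$ the energy space $E = [L^2(\gw,\mathbb{R}^{2+\ell})]^m$ becomes $[L^2(\gw,\mathbb{R}^3)]^m$, matching the statement. The nonlinear weak coupling term and the memristive-potential term in \eqref{FHN} are already in the form required by \eqref{NW}.

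Second, I would verify the two Assumptions. For \eqref{Asp1}, the cubic cross-term in the expansion $f(s,\gs)s = -\ap_2 s^4 + \ap_2(1+\gb_2)s^3 - \ap_2\gb_2 s^2 - \ga_2 s\gs$ is absorbed into the quartic by the Young-type bound $|s|^3 \leq \frac{3}{4}|s|^4 + \frac{1}{4}(1+\gb_2)^4$ as in \eqref{Vf1}, while the negative quadratic is discarded; the derivative bounds follow by completing the square on $-3\ap_2 s^2 + 2\ap_2(1+\gb_2)s - \ap_2\gb_2$ as in \eqref{Vf2}. For \eqref{Asp2}, $\langle\gL\gs,\gs\rangle = -b_2|\gs|^2$ is immediate, while for the linear $h(s,\gs) = a_2 s + c_2$ the estimate $h(s,\gs)\gs \leq q|s|^2|\gs| + L|\gs|$ follows from splitting into the regions $|s| \leq 1$ and $|s| > 1$ (equivalently, from $|s| \leq 1 + |s|^2$), and the derivative bounds hold trivially since $\pdr h/\pdr s$ is constant and $\pdr h/\pdr \gs = 0$. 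The resulting explicit values of $\ap, \gl, J, \gb, \ga, q, L, \xi$ are precisely those displayed in \eqref{npm}.

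Third, with Assumptions \eqref{Asp1}--\eqref{Asp2} confirmed for the parameters \eqref{npm}, Theorem \ref{ThM} applies as a black box: substituting these parameters into the threshold condition \eqref{SC} and the rate formulas \eqref{rate}--\eqref{kap} yields exponential synchronization of the solution semiflow in $E = [L^2(\gw,\mathbb{R}^3)]^m$ at the uniform convergence rate $\delta(P)$. There is no genuine mathematical obstacle here — the argument is a verification-plus-citation — but the one step requiring attention is the careful tracking of the Young's inequality constants in \eqref{Vf1}--\eqref{Vf3} so that the threshold condition \eqref{SC} is stated with the actual FitzHugh-Nagumo coefficients $\ap_2, \gb_2, \ga_2, a_2, b_2, c_2, k_2$ rather than merely asserted abstractly.
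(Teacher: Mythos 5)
Your proposal is correct and follows essentially the same route as the paper: identify $z_i=w_i$ (so $\ell=1$), verify Assumptions \eqref{Asp1}--\eqref{Asp2} via the computations \eqref{Vf1}--\eqref{Vf3} to obtain the parameters \eqref{npm}, and then apply Theorem \ref{ThM} directly. The only cosmetic difference is that the paper uses the sharper bound $|s|\leq \tfrac{1}{4}|s|^2+1$ to get $q=\tfrac{1}{4}a_2$ in \eqref{npm}, rather than $|s|\leq 1+|s|^2$, but this does not affect the argument.
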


Now we numerically solve the differential equations of the memristive FitzHugh--Nagumo neural network model \eqref{FHN} in a two-dimensional square domain. We use the finite difference method for the numerical scheme and programmed in Python.

Choose the following parameters in the model \eqref{FHN}:
\begin{gather*}
	m = 4,\ \eta_2=10,\ \alpha_2=0.5, \beta_2=0.1 \gamma_2=0.05,\ k_2=0.1,   \\
	a_2=0.3,\ b_2=3,\ c_2=1,\ q_2=0.2,\ r_2=10, \\
	V=0.5, \ \ r=0.1.
\end{gather*}
Take the time-step to be 0.00025 and spatial-step to be 1 on a $32*32$ membrane. We compute the $L^2$ norm of the neuron membrane potential $u_i$, the recovering variable $w_i$, the memductance $\rho_i$, and also the vector solution $g_i$ of the model equations \eqref{FHN} in the energy space $E$. The plotted curves are shown in Figure \ref{fig1} to Figure \ref{fig4}. 

In comparison between the results after 333 iterations and after 1000 iterations in Figure \ref{fig1} to Figure \ref{fig3}, one can observe the synchronization tendency of the three characterizing variables $(u_i, w_i, \rho_i)$ among the neurons in the simulated mimristive FitzHugh-Nagumo neural network. From Figure \ref{fig4}, we observe that the pairwise differences of the $L^2$-norm $\|g_i - g_j\|$ tend to $0$. 

We can calculate the following constants involved in Theorem \ref{ThM} based on our selection of parameters, rounding up to 2 digits.
\begin{gather*}
	C_1 = 8.01, \quad C_2 = 2.89, \quad \mu = 0.25, \quad K = 94714.73, \quad  Q = 15101.69, \quad \\
	G = 9.67, \quad C^* = 0.4, \quad  \kappa = 15.49, \quad  P = 19.58, \quad \delta = 3.
\end{gather*}
The constant $C^*$ from Gargliardo-Nirenberg inequality is chosen to be $0.4$ based on \cite{BV}.

Table \ref{tab:table1} to Table \ref{tab:table3} list the sampled values of the three components $u_i, w_i$, and $\rho_i$ of the simulated solution $g_i$ at one same point in the domain at $t=0$ , at the 100 and 10000 time-step. It is seen that with a big difference on the initial values, after a certain time, the curves of $u_i$, $w_i$, and $\rho_i$ tend to be close to each other among various neurons. 
\begin{figure}%
	\centering
	\subfloat{{\includegraphics[width=12cm]{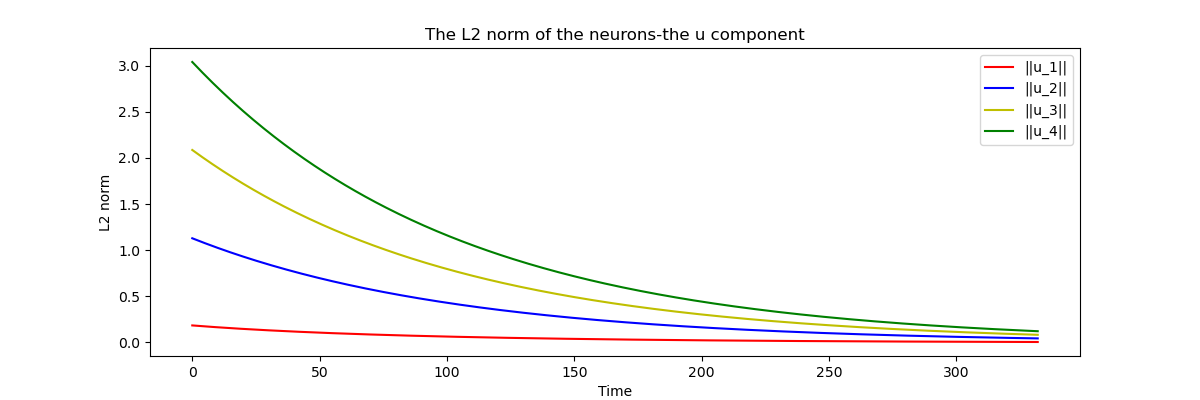}} }%
	\qquad
	\subfloat{{\includegraphics[width=12cm]{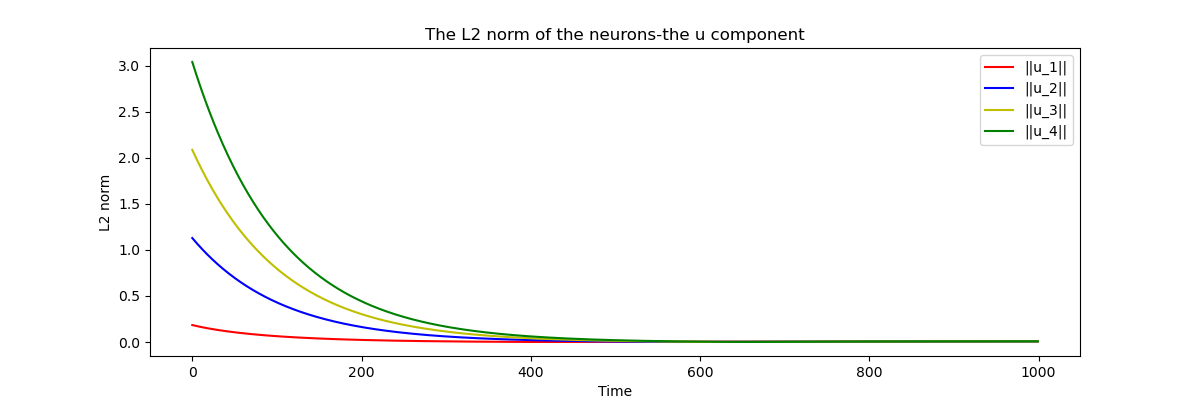} }}%
	\caption{The $L^2$ norm of the neurons component $u_i$ after 333 iterations (upper figure) and after 1000 iterations (lower figure)}%
	\label{fig1}%
\end{figure}

\begin{figure}%
	\centering
	\subfloat{{\includegraphics[width=12cm]{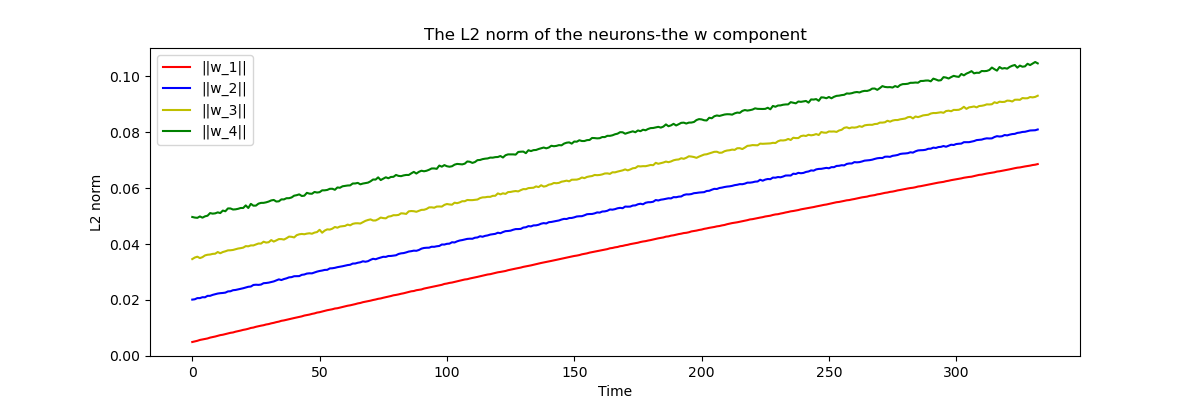} }}%
	\qquad
	\subfloat{{\includegraphics[width=12cm]{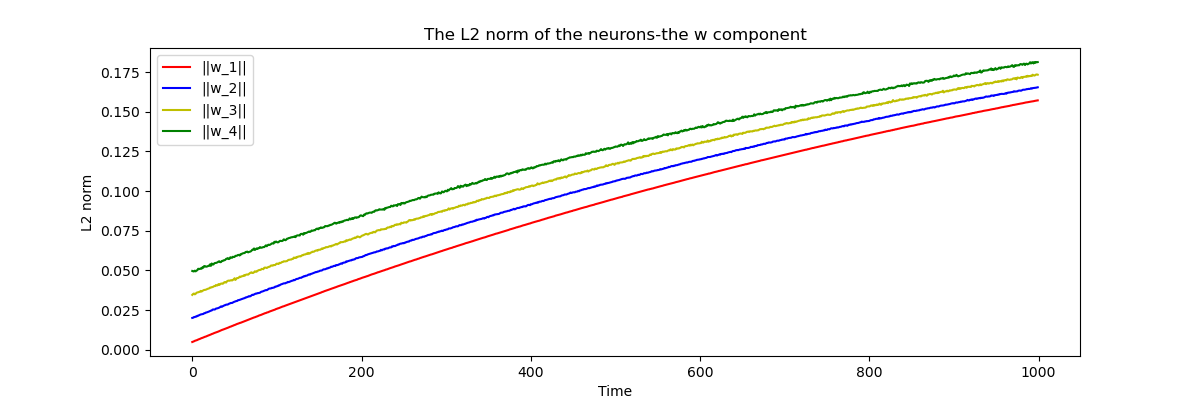} }}%
	\caption{The $L^2$ norm of the neurons component  $w_i$ after 333 iterations  (upper figure) and after 1000 iterations (lower figure)}%
	\label{fig2}%
\end{figure}

\begin{figure}%
	\centering
	\subfloat{{\includegraphics[width=12cm]{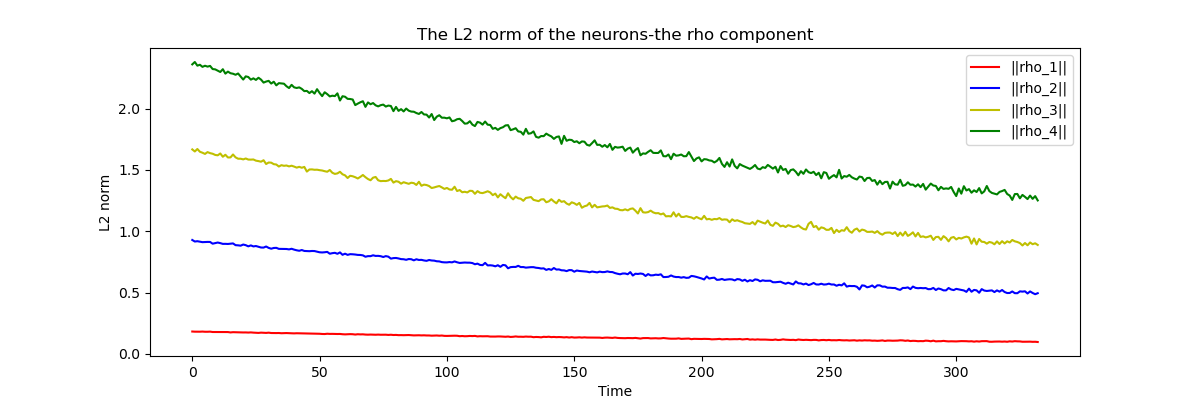} }}%
	\qquad
	\subfloat{{\includegraphics[width=12cm]{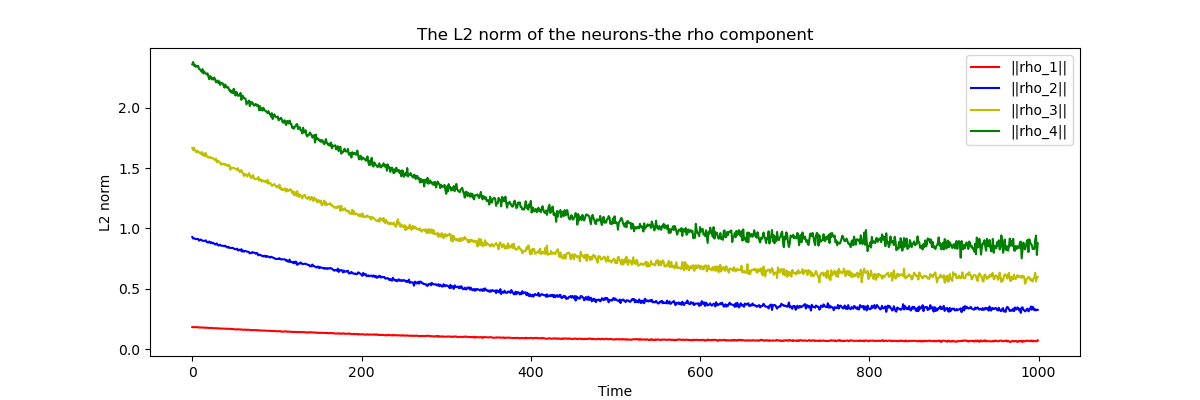} }}%
	\caption{The $L^2$ norm of the neurons component  $\rho_i$ after 333 iterations (upper figure) and after 1000 iterations (lower figure)}%
	\label{fig3}%
\end{figure}

\begin{figure}%
	\centering
	\subfloat{{\includegraphics[width=12cm]{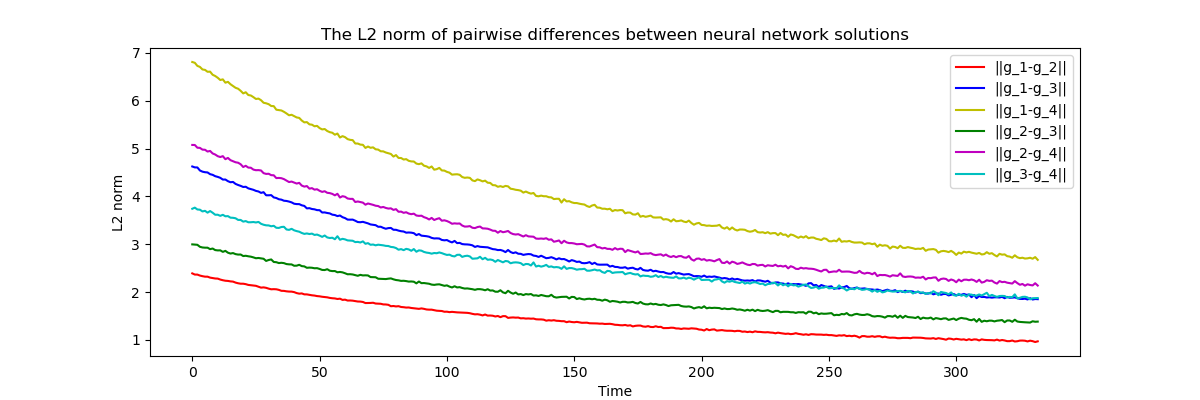} }}%
	\qquad
	\subfloat{{\includegraphics[width=12cm]{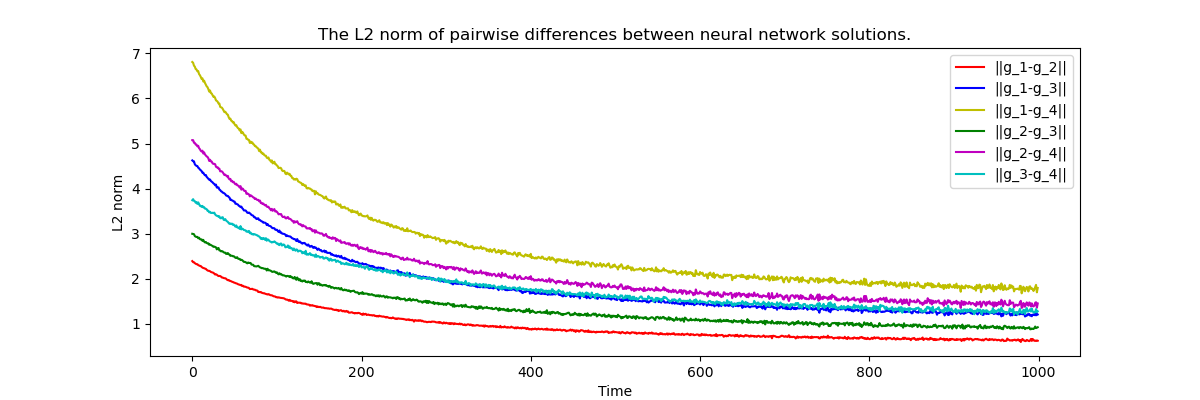} }}%
	\caption{The $L^2$ norm of pairwise differences between neural network solutions after 333 iterations (upper figure) and after 1000 iterations (lower figure)}%
	\label{fig4}%
\end{figure}

\begin{table}[ht!]
	\begin{center}
		\caption{Comparison of the $u_i$ at the point $x=10,\ y=10$ }
		\label{tab:table1}
		\begin{tabular}{l|c|c|r} % <-- Alignments: 1st column left, 2nd middle and 3rd right, with vertical lines in between
			& Initial Value & At the $100$ time step & At the $1000$ time step\\
			%& $\beta$ & $\gamma$ \\
			\hline
			$u_1$ & 0.00734587  &  0.00229806   & -0.00021516  \\
			$u_2$ & 0.03147844  &  0.01288047   & -0.00022583 \\
			$u_3$ & 0.06128196  &  0.02419445   & -0.00022394\\
			$u_4$ & 0.09076842  &  0.03540031   & -0.00026045\\
		\end{tabular}
	\end{center}
\end{table}

\begin{table}[ht!]
	\begin{center}
		\caption{Comparison of the $w_i$ at the point $x=10,\ y=10$ }
		\label{tab:table2}
		\begin{tabular}{l|c|c|r} % <-- Alignments: 1st column left, 2nd middle and 3rd right, with vertical lines in between
			& Initial Value & At the $100$ time step & At the $1000$ time step\\
			%& $\beta$ & $\gamma$ \\
			\hline
			$w_1$ & 0.00280643    & 0.02672859      & 0.17726598   \\
			$w_2$ & 0.02656693    & 0.04889201      & 0.18859465\\
			$w_3$ & 0.01219399    & 0.03569609      & 0.18192625\\
			$w_4$ & 0.09637967    & 0.11393263      & 0.22179753\\
		\end{tabular}
	\end{center}
\end{table}

\begin{table}[ht!]
	\begin{center}
		\caption{Comparison of the $\rho_i$ at the point $x=10,\ y=10$ }
		\label{tab:table3}
		\begin{tabular}{l|c|c|r} % <-- Alignments: 1st column left, 2nd middle and 3rd right, with vertical lines in between
			& Initial Value & At the $100$ time step & At the $1000$ time step\\
			%& $\beta$ & $\gamma$ \\
			\hline
			$\rho_1$ & 0.00822771    & 0.02672859   & 0.00067401  \\
			$\rho_2$ & 0.01216352    & 0.04889201   & 0.00101167\\
			$\rho_3$ & 0.08205714    &  0.03569609  & 0.00674807 \\
			$\rho_4$ & 0.08914909    & 0.11393263   & 0.00734463\\
		\end{tabular}
	\end{center}
\end{table}
The synchronization result rigorously proved in this work is illustrated by the presented example with sample selections of the system parameters and a randomized set of initial data. Our numerical simulation also exhibits that the neuron potentials $u_i$ seem to be synchronized fastest within a limited time, while it takes much longer time to observe the synchronization on the other two variables $w_i$ and $\rho_i$. 

This observation actually enhances the neurodynamical conjecture that adding a nonlinear memristor coupling in the neuron potential equation would accelerate the synchronization for the main variable of neuron membrane potential. On the other hand, it also hints that although the main Theorem~\ref{ThM} confirmed the exponential synchronization has a uniform but maybe small convergence rate, each of the three components may have a different synchronization rate, which turns out to be a new interesting problem for further research.

\textbf{Conclusions}. 
We summarize the new contributions of the results in this paper.

\vspace{3pt}
1. In this paper we propose and study a general mathematical framework that can cover many typical and useful partial-ordinary differential equation models to characterize spatiotemporal dynamics of biological neural networks with memristors and weak synaptic coupling, which is a challenging and open problem in mathematical neuroscience and potentially in complex artificial learning dynamics.  

2. The advancing contributions of this work are in three aspects. 

First it is proved in Section 3 and Section 4 that the solution semiflow of these memristive neural networks exhibits dissipative dynamics in common and admits ultimately uniform bounds in multiple norms. 

Second and more important is the exponential synchronization Theorem \ref{ThM} and Theorem \ref{TM}, in which we rigorously proved an explicit threshold condition in terms of the involved biological parameters and one mathematical parameter to ensure a synchronization at a uniform exponential rate in the $L^2$ energy norm. 

Third we provide an effective analytic approach and a significant methodology to pursue the synchronization investigation through scaled \emph{a priori} estimates, leverage of dynamic integral inequalities, and sharp interpolation inequalities (such as the crucial Gagliardo-Nirenberg inequalities on Sobolev spaces) to tackle and control the memristive effect and nonlinearity by the weak synaptic coupling strength only. 

3. Two illustrative applications of the main result on synchronization are presented by the memristive diffusive Hindmarsh-Rose neural networks and FitzHugh-Nagumo neural networks. 

It is expected that the mathematical framework and approach presented in this work and related computational simulations can be further generalized to a broader field and integrated with more applications in neurodynamics and network dynamics.

\bibliographystyle{amsplain}

\end{document}